\renewcommand{\d}{\mathrm d}
\newcommand{\tac}{\mathrm{tac}}
\newcommand{\R}{\mathbb R}
\newcommand{\wt}{\widetilde}
\renewcommand{\Re}{\operatorname{Re}}
\newcommand{\Ai}{\operatorname{Ai}}
\newcommand{\A}{\mathcal{A}}
\newcommand{\B}{\mathcal{B}}
\newcommand{\C}{\mathcal{C}}
\newcommand{\D}{\mathcal{D}}
\renewcommand{\L}{\mathcal{L}}
\newcommand{\Id}{\mathbbm{1}}
\renewcommand{\O}{\mathcal{O}}
\renewcommand{\P}{\mathbf P}
\newcommand{\T}{\mathcal T}
\newcommand{\lto}{\longrightarrow}
\newtheorem{proposition}{Proposition}[section]
\newtheorem{theorem}[proposition]{Theorem}
\newtheorem{lemma}[proposition]{Lemma}
\theoremstyle{definition}
\newtheorem{remark}{Remark}
\author{Patrik L.\ Ferrari\thanks{Institute for Applied Mathematics, Bonn University, Endenicher Allee 60, 53115 Bonn,
Germany. E-mail: {\tt ferrari@uni-bonn.de}} \and
B\'alint Vet\H o\thanks{Institute for Applied Mathematics, Bonn University, Endenicher Allee 60, 53115 Bonn,
Germany. E-mail: {\tt vetob@uni-bonn.de}}}
\title{Non-colliding Brownian bridges\\ and the asymmetric tacnode process}
\begin{document}

\maketitle

\sloppy

\begin{abstract}
We consider non-colliding Brownian bridges starting from two points and returning to the same position.
These positions are chosen such that, in the limit of large number of bridges, the two families of bridges just touch each other forming a tacnode.
We obtain the limiting process at the tacnode, the \emph{(asymmetric) tacnode process}.
It is a determinantal point process with correlation kernel given by two parameters:
(1) the curvature's ratio $\lambda>0$ of the limit shapes of the two families of bridges,
(2) a parameter $\sigma\in\R$ controlling the interaction on the fluctuation scale.
This generalizes the result for the symmetric tacnode process ($\lambda=1$ case).
\end{abstract}

\section{Introduction and results}

Systems of non-colliding Brownian motions have been much studied recently. They arise in random matrix theory
(see e.g.~\cite{FN98,KT04,KT07}), as limit processes of random walk, discrete growth models, and random tiling problems, see e.g.~\cite{KM59,Jo02b,Jo03b,Jo04,PS02,SI03,FS03,OR01,Fer04}.

Considering non-colliding Brownian bridges (as well as discrete analogues), various kinds of determinantal processes appear naturally.
Assume that the starting and ending points are chosen such that in the limit of large number of bridges occupy a region bordered by a deterministic limit shape
(see Figure~\ref{fig:tacnode} for an illustration). Then, inside the limit shape (in the bulk) one observes the process with the sine kernel, see e.g.~\cite{OR01}.
At the edge of the limit shape, the last bridge is described asymptotically by the Airy$_2$ process~\cite{PS02,FS03,Jo03b}.
Whenever there is a cusp in the limit shape, then the process around the cusp is the Pearcey process~\cite{TW06,OR07,BK09,BD10}.
All these process are quite robust, in the sense that by moving the initial and/or ending points of the bridges, the only changes are geometric
(e.g., the position and direction of the edge/cusp changes and numerical coefficients in the scaling) but the processes are the same without free parameter.

The case of the tacnode is more delicate and the limit process is described by two parameter.
Recently, three different approaches have been used to unravel the tacnode process.
In the first work, Adler, Ferrari and van Moerbeke~\cite{FAvM10} derived the symmetric tacnode process from a limit of non-intersecting random walks.
Meanwhile two other groups were after a solution for the Brownian bridge setting.
Soon after~\cite{FAvM10}, a solution appeared in term of a $4\times 4$ Riemann-Hilbert problem by Delvaux, Kuijlaars and Zhang~\cite{DKZ10}.
Their solution is for the generic tacnode process.
The third approach, leading to, in our opinion, the simplest of the three formulations was posted more recently by Johansson~\cite{Joh10}.
In the latter the asymptotic analysis was restricted to the symmetric tacnode.
In the present paper, we analyse the general case starting with the result on two sets of Brownian bridges of~\cite{Joh10}.

The equivalence between the last two formulations follows from the fact that the starting model is the same.
However it seems hard to compare the analytic formulas directly.
The equivalence of the results between the random walk and Brownian bridge case is expected by universality,
and it can be indirectly checked by analysing a discrete model with the two approaches, as it was made very recently for the double Aztec diamond in~\cite{AJvM11}.

Now we introduce the model and state the result of this paper.
We consider $(1+\lambda)n$ non-colliding standard Brownian motions with two starting points and two endpoints where $\lambda>0$ is a fixed parameter.
More precisely, $n$ of the Brownian motions start at $a_1$ at time $0$ and arrive at $a_1$ at time $1$,
the remaining\footnote{We do not write here integer part of $\lambda n$ to keep the notation simple.}
$\lambda n$ Brownian particles have starting and ending points at $a_2$ at time $0$ and $1$ respectively with $a_1<a_2$.
For finite times $t_1,\dots,t_k$, the positions of the particles at these times form an extended determinantal point process
(for more informations on determinantal point processes, see~\cite{Lyo03,BKPV05,Sos06,Jo05,Spo05}).
For a fixed integer $n$ and a fixed $\lambda>0$, let us denote by $\L_{n,\lambda n}(s,u,t,v)$ the kernel of this determinantal point process with $s,t\in(0,1)$ and $u,v\in\R$.
The kernel $\L_{n,\lambda n}(s,u,t,v)$ was obtained in~\cite{Joh10}, see Theorem~\ref{thm:joh} below for the formula.

\begin{figure}
\begin{center}
\includegraphics[height=7.5cm]{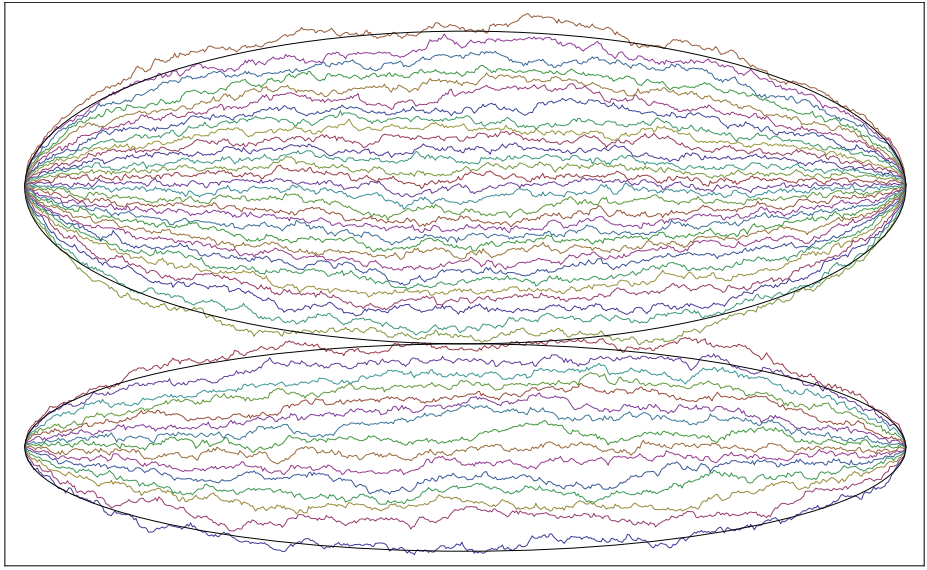}
\caption{The asymmetric system of non-colliding Brownian motions with $15$ respectively $30$ paths in the two groups, i.e.\ $n=15,\lambda=2$.}
\label{fig:tacnode}
\end{center}
\end{figure}

In this paper, we take the $n\to\infty$ limit in the model described above.
The global picture is that the two systems of non-colliding Brownian motions form two ellipses touching each other at a tacnode (see Figure~\ref{fig:tacnode} for an illustration).
Under proper rescaling, we obtain a limiting determinantal point process in the neighborhood of the point of tangency.

Here we consider the general case when two parameters modulate the limit process.
One of them is the strength of interaction, called $\sigma$, the other one is a measure of asymmetry,
called $\lambda$ which we have chosen to be the ratio of curvatures of the two ellipses at the point of tangency. For $\lambda=1$, we get back to the symmetric case treated in~\cite{Joh10}.

The scaling of the starting and ending points is
\begin{align}
a_1&=-\left(\sqrt n+\frac\sigma2n^{-1/6}\right),\label{defa1}\\
a_2&=\sqrt\lambda\left(\sqrt n+\frac\sigma2n^{-1/6}\right).\label{defa2}
\end{align}
We denote by $a$ the distance of the two endpoints:
\begin{equation}
a=a_2-a_1=\left(1+\sqrt\lambda\right)\left(\sqrt n+\frac\sigma2n^{-1/6}\right).\label{defa}
\end{equation}
In this setting, the tacnode is at $(1/2,0)$, so that the space-time scaling we need to consider is
\begin{equation}\begin{aligned}
s&=\frac12\left(1+\tau_1n^{-1/3}\right),&t&=\frac12\left(1+\tau_2n^{-1/3}\right),\\
u&=\frac12\xi_1n^{-1/6},&v&=\frac12\xi_2n^{-1/6}.\label{defus}
\end{aligned}\end{equation}

The limiting kernel takes the form
\begin{multline}
\L_{\tac}^{\lambda,\sigma}(\tau_1,\xi_1,\tau_2,\xi_2)=-\Id(\tau_1<\tau_2)p(\tau_2-\tau_1;\xi_1,\xi_2)\\
+L_{\tac}^{\lambda,\sigma}(\tau_1,\xi_1,\tau_2,\xi_2)
+\lambda^{1/6}L_{\tac}^{\lambda^{-1},\lambda^{2/3}\sigma}\left(\lambda^{1/3}\tau_1,-\lambda^{1/6}\xi_1,\lambda^{1/3}\tau_2,-\lambda^{1/6}\xi_2\right)
\label{defLL}
\end{multline}
where
\begin{equation}
p(t;x,y)=\frac1{\sqrt{4\pi t}}\exp\left(-\frac{(y-x)^2}{4t}\right)
\end{equation}
is the Gaussian kernel. To describe $L_{\tac}$, we need to introduce some notations. For a parameter $s$, let
\begin{equation}
\Ai^{(s)}(x)=e^{\frac23 s^3+xs}\Ai(s^2+x)
\end{equation}
be the extended Airy function where $\Ai^{(0)}=\Ai$ is the standard Airy function. The extended Airy kernel is given by
\begin{equation}
K_{\Ai}^{(\alpha,\beta)}(x,y)=\int_0^\infty \Ai^{(\alpha)}(x+u)\Ai^{(\beta)}(y+u)\,\d u
\end{equation}
where $K_{\Ai}^{(0,0)}=K_{\Ai}$ is the standard Airy kernel.
Let us denote the function
\begin{equation}
B_{\tau,\xi}^\lambda(x)=\int_0^\infty \Ai^{(\tau)}\left(\xi+\left(1+\sqrt{\lambda^{-1}}\right)^{1/3}\mu\right)\Ai(x+\mu)\d\mu
\end{equation}
which is reminiscent of the definition of the Airy kernel.
Let also
\begin{equation}
b_{\tau,\xi}^\lambda(x)=\lambda^{1/6}\Ai^{(\lambda^{1/3}\tau)}(-\lambda^{1/6}\xi+(1+\sqrt\lambda)^{1/3}x),
\end{equation}
and define
\begin{multline}
L_\tac^{\lambda,\sigma}(\tau_1,\xi_1,\tau_2,\xi_2)=K_{\Ai}^{(-\tau_1,\tau_2)}(\sigma+\xi_1,\sigma+\xi_2)\\
+(1+\sqrt{\lambda^{-1}})^{1/3}\left\langle B_{\tau_2,\sigma+\xi_2}^\lambda-b_{\tau_2,\sigma+\xi_2}^\lambda,
\left(\Id-\chi_{\wt\sigma}K_{\Ai}\chi_{\wt\sigma}\right)^{-1}B_{-\tau_1,\sigma+\xi_1}^\lambda\right\rangle_{L^2((\wt\sigma,\infty))}
\label{defLtac}\end{multline}
where $\chi_a(x)=\Id(x>a)$ and
\begin{equation}\label{defsigmatilde}
\wt\sigma=\lambda^{1/6}(1+\sqrt\lambda)^{2/3}\sigma.
\end{equation}

Now we can state our main result.
\begin{theorem}\label{thm:main}
The (asymmetric) tacnode process $\T^{\sigma,\lambda}$ obtained by the limit of the two non-colliding families
of $n$ respectively $\lambda n$ Brownian motions under the scaling \eqref{defa1}--\eqref{defus}
in the neighborhood of the tacnode is given by the following gap probabilities.
For any $k$ and $t_1,\dots,t_k\in(0,1)$ and for any compact set $E\subset\{t_1,\dots,t_k\}\times\R$,
\begin{equation}
\P(\T^{\sigma,\lambda}(\Id_E)=\emptyset)=\det(\Id-\L_{\tac}^{\lambda,\sigma})_{L^2(E)}
\end{equation}
where $\L_{\tac}^{\lambda,\sigma}$ is the extended kernel given by \eqref{defLL}.
\end{theorem}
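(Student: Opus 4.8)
The plan is to start from the exact kernel $\L_{n,\lambda n}(s,u,t,v)$ of Johansson (Theorem~\ref{thm:joh}), substitute the scaling \eqref{defa1}--\eqref{defus}, and perform a steepest-descent asymptotic analysis term by term. The exact kernel in~\cite{Joh10} has the structure of a ``Brownian motion part'' minus the Gaussian transition density $p$, plus a ``bridge correction'' involving a resolvent of an operator acting on the other family of bridges. Accordingly, I would first conjugate the kernel by a suitable prefactor (which cancels in the determinant \eqref{defLL}) to tame the exponential growth, then split the analysis into three pieces matching the three summands on the right-hand side of \eqref{defLL}: (i) the explicit Gaussian term $-\Id(\tau_1<\tau_2)p(\tau_2-\tau_1;\xi_1,\xi_2)$, which comes from the diffusive part of the single-family kernel and requires only the local CLT for Brownian motion; (ii) the term $L_\tac^{\lambda,\sigma}$, arising from the first family of $n$ bridges interacting with the second; and (iii) the term $\lambda^{1/6}L_\tac^{\lambda^{-1},\lambda^{2/3}\sigma}(\dots)$, arising from the symmetric role of the second family of $\lambda n$ bridges (the scaling factors $\lambda^{1/3}$, $\lambda^{1/6}$ and $\lambda^{2/3}\sigma$ being exactly the change of variables relating one ellipse's natural Airy scaling to the other's).

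The core work is the asymptotics of the double contour integrals defining the pieces of the Johansson kernel. For each I would deform the contours to pass through the relevant critical point of the phase function; under \eqref{defa1}--\eqref{defa2}, the phase has a double critical point at the point corresponding to the tacnode, which after the $n^{-1/3}$, $n^{-1/6}$ rescaling produces the cubic exponents characteristic of Airy asymptotics. The shift by $\sigma$ in \eqref{defa1}--\eqref{defa2} translates, on the fluctuation scale, into the additive shift $\sigma+\xi_i$ appearing in \eqref{defLtac}, and the asymmetry enters through the factors $(1+\sqrt{\lambda^{-1}})^{1/3}$ and $(1+\sqrt\lambda)^{1/3}$, which are precisely the ratios of the local curvatures entering the saddle-point Hessians of the two families. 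In this step one recognizes $\Ai^{(s)}$, $K_\Ai^{(\alpha,\beta)}$, $B_{\tau,\xi}^\lambda$ and $b_{\tau,\xi}^\lambda$ as the scaling limits of the respective integral kernels, and the finite-$n$ resolvent $(\Id-\mathcal K_n)^{-1}$ converges to $(\Id-\chi_{\wt\sigma}K_\Ai\chi_{\wt\sigma})^{-1}$ on $L^2((\wt\sigma,\infty))$, with $\wt\sigma=\lambda^{1/6}(1+\sqrt\lambda)^{2/3}\sigma$ emerging as the image of the cutoff under the Airy change of variables.

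Having established pointwise (or better, locally uniform) convergence of the conjugated kernel on each $\{t_i\}\times\R$, I would upgrade this to convergence of the Fredholm determinant on $L^2(E)$ for compact $E$. This requires (a) a trace-class bound: domination of the finite-$n$ kernel by an integrable exponentially decaying envelope in the $\xi$ variables, uniformly in large $n$, obtained from Gaussian tail estimates on the Airy function along the steepest-descent contours; and (b) an argument that the resolvent operators are uniformly bounded and converge in the appropriate operator sense — here one uses that $\|\chi_{\wt\sigma}K_\Ai\chi_{\wt\sigma}\|_{\rm op}<1$, so $\Id-\chi_{\wt\sigma}K_\Ai\chi_{\wt\sigma}$ is boundedly invertible, and that the finite-$n$ operators are eventually a small perturbation of it. Combining the dominated convergence of the matrix entries with the continuity of $\det(\Id-\cdot)$ on trace-class operators then yields $\det(\Id-\L_{n,\lambda n})_{L^2(E)}\to\det(\Id-\L_\tac^{\lambda,\sigma})_{L^2(E)}$, and since the left-hand side equals $\P(\text{no particle of the rescaled process in }E)$ for each $n$, the claimed gap-probability formula follows.

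I expect the main obstacle to be step~(a)/(b): obtaining \emph{uniform in $n$} trace-class control of the bridge-correction term, because it contains the inverse of an $n$-dependent operator, so one cannot simply bound each kernel entry in isolation. The resolution is to first prove that the finite-$n$ operator $\mathcal K_n$ on the cutoff half-line converges in trace norm to $\chi_{\wt\sigma}K_\Ai\chi_{\wt\sigma}$ (again via dominated convergence with an Airy-type envelope, exploiting that the contour integrals defining $\mathcal K_n$ already have the right decay before taking $n\to\infty$), deduce $\|\mathcal K_n\|_{\rm op}\le 1-\delta$ for large $n$, and only then expand the resolvent as a norm-convergent Neumann-type series whose terms are each handled by the pointwise asymptotics together with the envelope bound. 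The rest of the argument — the saddle-point computations and the identification of the limiting special functions — is then a (lengthy but) routine steepest-descent exercise.
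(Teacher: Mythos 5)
Your overall strategy (start from Johansson's Theorem~\ref{thm:joh}, substitute the scaling \eqref{defa1}--\eqref{defus}, do steepest descent on each term, upgrade pointwise convergence to convergence of the Fredholm determinant via uniform exponentially decaying envelopes) matches the paper, and your identification of the double critical point and the role of $\sigma$, $\wt\sigma$, and the $\lambda$-dependent curvature factors is accurate.

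Where you diverge from the paper is in the handling of the resolvent term $\langle(\B+\beta),(\Id-M_0)^{-1}\C\rangle$. You propose to prove trace-norm convergence $M_0^1\to\chi_{\wt\sigma}K_{\Ai}\chi_{\wt\sigma}$, deduce $\|M_0^1\|_{\rm op}\le1-\delta$ for large $n$, and expand $(\Id-M_0^1)^{-1}$ as a Neumann series. The paper instead applies the classical rank-one determinant identity, rewriting the scalar product as the ratio
\[
\frac{\det(\Id-M_0^1+(\B+\beta)\otimes\C)_{L^2((1,\infty))}}{\det(\Id-M_0^1)_{L^2((1,\infty))}},
\]
and then controls both numerator and denominator by expanding their Fredholm series and applying dominated convergence with Hadamard bounds on the determinants. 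This entirely avoids any operator-norm estimate on $M_0^1$: all that is needed is the pointwise convergence of the kernel entries (Lemma~\ref{lem:ptwconv}), the uniform exponential bounds on them (Lemma~\ref{lem:bounds}), and Johansson's result that $\det(\Id-M_0^1)>0$, together with the fact that the limit of the denominator, $F_2(\wt\sigma)$, is strictly positive. Your route would also need the limit operator to be a contraction, which holds because $\chi_{\wt\sigma}K_{\Ai}\chi_{\wt\sigma}=T^2\ge0$ and $F_2(\wt\sigma)>0$; but $\det(\Id-M_0^1)>0$ alone does \emph{not} imply $\|M_0^1\|_{\rm op}<1$ for finite $n$, since $M_0^1$ is not manifestly positive semi-definite, so the Neumann-series step as stated is not justified. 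You could repair this by invoking trace-norm convergence together with continuity of the inverse on the open set of invertible operators (rather than a Neumann series), but the paper's determinant-ratio device is both simpler and tighter, requiring only pointwise control of kernels rather than trace-norm convergence of operators.
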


\begin{remark}\label{rem:contsymm}
The (asymmetric) tacnode process has an intrinsic symmetry under the reflection on the horizontal axis
that is inherited from the finite system of Brownian motions.
This corresponds to the following transformation of the variables:
\begin{align}
\lambda&\lto\lambda^{-1},\\
n&\lto\lambda n,\\
\tau_i&\lto\lambda^{1/3}\tau_i,\label{tauchange}\\
\xi_i&\lto-\lambda^{1/6}\xi_i,\label{xichange}\\
\sigma&\lto\lambda^{2/3}\sigma.\label{sigmachange}
\end{align}
The different powers of $\lambda$ in the change of parameters \eqref{tauchange}--\eqref{sigmachange}
is necessary for observing the process on the same scale.
Note that $\wt\sigma$ given in \eqref{defsigmatilde} is left invariant under the above transformation.
\end{remark}
Next we present an alternative formulation of $\L_{\tac}^{\lambda,\sigma}$, inspired from the analogue reformulation of the kernel in~\cite{AJvM11}. Let us introduce the function
\begin{equation}\label{defC}\begin{aligned}
C^\lambda_{\tau_,\xi}(x)&=b^\lambda_{\tau,\xi}(x)-B^\lambda_{\tau,\xi}(x)\\
&=\lambda^{1/6}\Ai^{(\lambda^{1/3}\tau)}(-\lambda^{1/6}\xi+(1+\sqrt\lambda)^{1/3}x)\\
&\qquad-\int_0^\infty \Ai^{(\tau)}(\xi+(1+\sqrt{\lambda^{-1}})^{1/3}\mu)\Ai(x+\mu)\d\mu
\end{aligned}\end{equation}
Using this definition, we can give another expression for the kernel $\L_{\tac}^{\lambda,\sigma}$ which is formally similar to \eqref{defLL},
but the ingredients can be given by a single integral as follows.

\begin{proposition}\label{prop:alternative}
With
\begin{equation}
\wt L_{\tac}^{\lambda,\sigma}(\tau_1,\xi_1,\tau_2,\xi_2)
=(1+\sqrt{\lambda^{-1}})^{1/3}
\int_{\wt\sigma}^\infty (\Id-\chi_{\wt\sigma}K_{\Ai}\chi_{\wt\sigma})^{-1}
C^\lambda_{-\tau_1,\sigma+\xi_1}(x) b^\lambda_{\tau_2,\sigma+\xi_2}(x)\,\d x,
\end{equation}
we have
\begin{multline}
\L_{\tac}^{\lambda,\sigma}(\tau_1,\xi_1,\tau_2,\xi_2)=-\Id(\tau_1<\tau_2)p(\tau_2-\tau_1;\xi_1,\xi_2)\\
+\wt L_{\tac}^{\lambda,\sigma}(\tau_1,\xi_1,\tau_2,\xi_2)
+\lambda^{1/6}\wt L_{\tac}^{\lambda^{-1},\lambda^{2/3}\sigma}\left(\lambda^{1/3}\tau_1,-\lambda^{1/6}\xi_1,\lambda^{1/3}\tau_2,-\lambda^{1/6}\xi_2\right).
\label{defLL2}
\end{multline}
\end{proposition}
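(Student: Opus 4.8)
The plan is to show that $\wt L_{\tac}^{\lambda,\sigma}$ equals $L_{\tac}^{\lambda,\sigma}$ up to exactly the single Gaussian term $-\Id(\tau_1<\tau_2)p(\tau_2-\tau_1;\xi_1,\xi_2)$ together with a corresponding contribution arising from the $\lambda\to\lambda^{-1}$ symmetry, so that the right-hand sides of \eqref{defLL} and \eqref{defLL2} coincide. Concretely, I would first isolate in \eqref{defLtac} the ``free'' part $K_{\Ai}^{(-\tau_1,\tau_2)}(\sigma+\xi_1,\sigma+\xi_2)$ and the ``interaction'' part, the $L^2((\wt\sigma,\infty))$ inner product. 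Using $C^\lambda = b^\lambda - B^\lambda$ from \eqref{defC}, the inner product in \eqref{defLtac} can be written as
\[
-(1+\sqrt{\lambda^{-1}})^{1/3}\Big\langle C^\lambda_{\tau_2,\sigma+\xi_2},(\Id-\chi_{\wt\sigma}K_{\Ai}\chi_{\wt\sigma})^{-1}B^\lambda_{-\tau_1,\sigma+\xi_1}\Big\rangle_{L^2((\wt\sigma,\infty))}.
\]
The difference between this and $\wt L_{\tac}^{\lambda,\sigma}$, which has $b^\lambda$ in the second slot, is controlled by $\langle C^\lambda,(\Id-\chi K_{\Ai}\chi)^{-1}C^\lambda\rangle$, i.e.\ by replacing $B^\lambda$ by $-C^\lambda + b^\lambda$ in the appropriate argument.

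The key algebraic identities I would establish are: first, a resolvent/convolution identity expressing $\int_0^\infty \Ai^{(\alpha)}(x+\mu)\,(\text{stuff})\,\d\mu$ in a form that lets the $B^\lambda$ integrals be absorbed into the resolvent $(\Id-\chi_{\wt\sigma}K_{\Ai}\chi_{\wt\sigma})^{-1}$; second, the semigroup/composition property of the extended Airy functions, namely that convolving $\Ai^{(\alpha)}$ and $\Ai^{(\beta)}$ over a half-line reproduces $K_{\Ai}^{(\alpha,\beta)}$ up to a Gaussian defect when $\alpha=-\tau_1$, $\beta=\tau_2$ and $\tau_1<\tau_2$ — this is the standard identity $\int_{\R}\Ai^{(-\tau_1)}(x+r)\Ai^{(\tau_2)}(y+r)\,\d r = p(\tau_2-\tau_1;x,y)$ when $\tau_1<\tau_2$, and the half-line version is what produces the $K_{\Ai}^{(-\tau_1,\tau_2)}$ term plus the Gaussian. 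Combining these, the cross terms $\langle B^\lambda, (\Id-\chi K_{\Ai}\chi)^{-1}\, b^\lambda\rangle$ and the $b^\lambda$--$b^\lambda$ term should reorganize, after adding the mirror contribution with $\lambda\leftrightarrow\lambda^{-1}$, into precisely $-\Id(\tau_1<\tau_2)p(\tau_2-\tau_1;\xi_1,\xi_2) + K_{\Ai}^{(-\tau_1,\tau_2)}(\sigma+\xi_1,\sigma+\xi_2)$ plus the mirror copy.

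In more detail, I would write $\wt L_{\tac}^{\lambda,\sigma} = (1+\sqrt{\lambda^{-1}})^{1/3}\langle (\Id-\chi_{\wt\sigma}K_{\Ai}\chi_{\wt\sigma})^{-1}C^\lambda_{-\tau_1,\sigma+\xi_1}, b^\lambda_{\tau_2,\sigma+\xi_2}\rangle$, expand $C^\lambda = -B^\lambda + b^\lambda$, and split into a $B^\lambda$-part and a $b^\lambda$-part. The $B^\lambda$-part, after using $\langle (\Id-\chi K_{\Ai}\chi)^{-1}(-B^\lambda), b^\lambda\rangle = \langle (\Id-\chi K_{\Ai}\chi)^{-1}(-B^\lambda), (C^\lambda+B^\lambda)\rangle$ and recognizing $C^\lambda$ again, should match the inner-product term in \eqref{defLtac} (with the sign and argument bookkeeping handled by the symmetry of $K_{\Ai}$ and of the resolvent). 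The $b^\lambda$-part, namely $(1+\sqrt{\lambda^{-1}})^{1/3}\langle (\Id-\chi K_{\Ai}\chi)^{-1}b^\lambda_{-\tau_1,\sigma+\xi_1}, b^\lambda_{\tau_2,\sigma+\xi_2}\rangle$, is the new piece; I expect that expanding $(\Id-\chi K_{\Ai}\chi)^{-1} = \Id + \chi K_{\Ai}\chi(\Id-\chi K_{\Ai}\chi)^{-1}$ and using the explicit form of $b^\lambda$ (a single extended Airy function with rescaled arguments) reduces the leading $\langle b^\lambda, b^\lambda\rangle$ term, via a change of variables and the convolution identity above, to $K_{\Ai}^{(-\tau_1,\tau_2)}(\sigma+\xi_1,\sigma+\xi_2) - \Id(\tau_1<\tau_2)p(\tau_2-\tau_1;\xi_1,\xi_2)$, while the remaining resolvent tail combines with the corresponding tail from the $B^\lambda$-manipulation. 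Adding the $\lambda\leftrightarrow\lambda^{-1}$ mirror term of \eqref{defLL2} then reconstructs the mirror term of \eqref{defLL}, since $\wt\sigma$ is invariant under the transformation in Remark~\ref{rem:contsymm}.

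The main obstacle I anticipate is the bookkeeping of the cross terms: showing that the ``extra'' pieces generated by converting $b^\lambda$ back into $B^\lambda+C^\lambda$ inside the resolvent — in both the $\lambda$ and the $\lambda^{-1}$ copies — cancel against each other. This requires a careful symmetrization argument using that $K_{\Ai}$ is symmetric, that the operator $\chi_{\wt\sigma}K_{\Ai}\chi_{\wt\sigma}$ (hence its resolvent) is self-adjoint on $L^2((\wt\sigma,\infty))$, and that the two families $B^\lambda, b^\lambda$ are related to $B^{\lambda^{-1}}, b^{\lambda^{-1}}$ by the change of variables in Remark~\ref{rem:contsymm} with $\wt\sigma$ fixed. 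Checking integrability and the validity of interchanging the half-line integrals with the resolvent (so that Fubini applies) is routine but needs the standard Gaussian decay bounds on $\Ai^{(s)}$; I would invoke these without detailed estimates.
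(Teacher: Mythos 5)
Your overall goal is right, but the mechanism you propose is wrong in one place and missing the decisive algebraic tool in another.

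First, the Gaussian term $-\Id(\tau_1<\tau_2)p(\tau_2-\tau_1;\xi_1,\xi_2)$ appears \emph{identically} on the right-hand sides of both \eqref{defLL} and \eqref{defLL2}: it is carried through unchanged and plays no role in the proof. The proposition reduces to showing that
\begin{equation*}
L_{\tac}^{\lambda,\sigma}(\tau_1,\xi_1,\tau_2,\xi_2)+\lambda^{1/6}L_{\tac}^{\lambda^{-1},\lambda^{2/3}\sigma}\left(\lambda^{1/3}\tau_1,-\lambda^{1/6}\xi_1,\lambda^{1/3}\tau_2,-\lambda^{1/6}\xi_2\right)
=\wt L_{\tac}^{\lambda,\sigma}+\lambda^{1/6}\wt L_{\tac}^{\lambda^{-1},\cdots},
\end{equation*}
a purely algebraic identity with no Gaussian involved. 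Your plan to ``produce'' the Gaussian from a full-line convolution identity $\int_{\R}\Ai^{(-\tau_1)}(x+r)\Ai^{(\tau_2)}(y+r)\,\d r$ is therefore aimed at a term that doesn't need to be produced; and the half-line integral defining $K_{\Ai}^{(\alpha,\beta)}$ yields exactly $K_{\Ai}^{(\alpha,\beta)}$, not $K_{\Ai}^{(\alpha,\beta)}$ plus a Gaussian defect. If you followed your plan you would find you had one Gaussian too many.

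Second, and more seriously, the calculation of the $b$--$b$ inner product that you flag as ``the new piece'' does not produce $K_{\Ai}^{(-\tau_1,\tau_2)}(\sigma+\xi_1,\sigma+\xi_2)$. A direct change of variables shows
\begin{equation*}
(1+\sqrt{\lambda^{-1}})^{1/3}\left\langle b^\lambda_{-\tau_1,\sigma+\xi_1},b^\lambda_{\tau_2,\sigma+\xi_2}\right\rangle_{L^2((\wt\sigma,\infty))}
=\lambda^{1/6}K_{\Ai}^{(-\lambda^{1/3}\tau_1,\lambda^{1/3}\tau_2)}\left(\lambda^{2/3}\sigma-\lambda^{1/6}\xi_1,\lambda^{2/3}\sigma-\lambda^{1/6}\xi_2\right),
\end{equation*}
i.e.\ the \emph{mirror} Airy-kernel term of \eqref{defLL}. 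This crossing of the two $\lambda$-worlds is precisely why $L_{\tac}^{\lambda,\sigma}\ne\wt L_{\tac}^{\lambda,\sigma}$ individually and only the \emph{sum} over $\lambda$ and $\lambda^{-1}$ closes up. Your plan does not control this mixing.

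What you are missing is the paper's square-root structure. Introduce $T$ on $L^2((\wt\sigma,\infty))$ with kernel $T(x,y)=\Ai(x+y-\wt\sigma)$, so $T^2=\chi_{\wt\sigma}K_{\Ai}\chi_{\wt\sigma}$ and $(\Id-\chi_{\wt\sigma}K_{\Ai}\chi_{\wt\sigma})^{-1}=\sum_{r\ge0}T^{2r}$, and introduce the un-smeared function $S^{\lambda,\sigma}_{\tau,\xi}(x)=\Ai^{(\tau)}(\xi-\sqrt\lambda\,\sigma+(1+\sqrt{\lambda^{-1}})^{1/3}x)$. Then $B^\lambda_{\tau,\sigma+\xi}=TS^{\lambda,\sigma}_{\tau,\xi}$ and $b^\lambda_{\tau,\sigma+\xi}=\lambda^{1/6}S^{\lambda^{-1},\lambda^{2/3}\sigma}_{\lambda^{1/3}\tau,-\lambda^{1/6}\xi}$, whence $C^\lambda_{\tau,\sigma+\xi}=\lambda^{1/6}S^{\lambda^{-1},\cdots}-TS^{\lambda,\cdots}$, and also $K_{\Ai}^{(-\tau_1,\tau_2)}(\sigma+\xi_1,\sigma+\xi_2)=(1+\sqrt{\lambda^{-1}})^{1/3}\langle S^{\lambda,\sigma}_{-\tau_1,\xi_1},S^{\lambda,\sigma}_{\tau_2,\xi_2}\rangle$. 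Substituting these into both $L_{\tac}^{\lambda}$ and its mirror, the sum reorganizes into four $\langle \sum_r T^{k} S,\,S\rangle$ blocks, with $k$ even for the two ``diagonal'' terms and $k$ odd for the two ``cross'' terms; regrouping even and odd powers of $T$ and reversing the identities for $C$ gives \eqref{defLL2}. That even/odd splitting of $\sum T^k$ is the single idea your proposal does not supply, and without it the ``extra pieces'' you acknowledge as the main obstacle will not cancel.
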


\subsubsection*{Acknowledgements}
The authors would like to thanks M.~Adler, P.~van Moerbeke, and K.~Johansson for discussions regarding their work on the double Aztec diamond.
This work is supported by the Hausdorff Center for Mathematics and the German Research Foundation via the SFB611--A12 project.

\section{Johansson's formula}

In this section, we recall Theorem 1.4 of Johansson in~\cite{Joh10}, the starting point for our analysis.
He obtains a formula for the correlation kernel of two non-colliding families of Brownian particles with the following properties.
The first family consists of $n$ particles, and they start at position $a_1$ at time $0$ and end at $a_1$
at time $1$. The other family has $m$ Brownian motions which start at position $a_2$ at time $0$ and end
at position $a_2$ at time $1$ with $a_2>a_1$. This system of Brownian motions conditioned on no intersection
in the time interval $(0,1)$ forms an extended determinantal point process with kernel $\L_{n,m}(s,u,t,v)$
given as follows.

Let $a=a_2-a_1$ and $d>0$ a parameter which can be chosen freely in Theorem~\ref{thm:joh}. We use the notation
\begin{align}
\A_{s,u,t,v}^1&=\frac{d^2}{(2\pi i)^2\sqrt{(1-s)(1-t)}} \int_{i\R}\d w\int_{D_{a_1}}\d z
\left(\frac{1-w/a_1}{1-z/a_1}\right)^n\frac1{w-z}\label{defA1}\\
&\qquad\times\exp\left(-\frac{sz^2}{2(1-s)}-a_1z+\frac{uz}{1-s}+\frac{tw^2}{2(1-t)}+a_1w-\frac{vw}{1-t}\right),\notag\\
\B_{t,v}^1(x)&=\frac{d\sqrt a}{(2\pi i)^2\sqrt{1-t}} \int_{i\R}\d w\int_{D_{a_1}}\d z
\left(\frac{1-w/a_1}{1-z/a_1}\right)^n\left(1-z/a_2\right)^m\frac1{z-w}\label{defB1}\\
&\qquad\times\exp\left(\frac{tw^2}{2(1-t)}+a_1w-\frac{vw}{1-t}+axz\right),\notag\\
\beta_{t,v}^1(x)&=\frac{d\sqrt a}{2\pi i\sqrt{1-t}} \int_{i\R}\d w \left(1-\frac w{a_2}\right)^m
\exp\left(\frac{tw^2}{2(1-t)}+a_1w-\frac{vw}{1-t}+axw\right)\label{defbeta1},\\
\C_{s,u}^1(y)&=\frac{d\sqrt a}{(2\pi i)^2\sqrt{1-s}} \int_{D_{a_1}}\d z\int_{D_{a_2}}\d w
\left(\frac{1-w/a_1}{1-z/a_1}\right)^n\left(\frac1{1-w/a_2}\right)^m\label{defC1}\frac1{w-z}\\
&\qquad\times\exp\left(-\frac{sz^2}{2(1-s)}-a_1z+\frac{uz}{1-s}-ayw\right),\notag\\
M_0^1(x,y)&=\frac a{(2\pi i)^2} \int_{D_{a_1}}\d z\int_{D_{a_2}}\d w
\left(\frac{1-w/a_1}{1-z/a_1}\right)^n\left(\frac{1-z/a_2}{1-w/a_2}\right)^m\frac1{z-w} e^{axz-ayw}\label{defM01}
\end{align}
where $D_{a_1}$ and $D_{a_2}$ are counterclockwise oriented circles around $a_1$ and $a_2$ respectively with small radii.

Very similarly, let
\begin{align}
\A_{s,u,t,v}^2&=\frac{d^2}{(2\pi i)^2\sqrt{(1-s)(1-t)}} \int_{i\R}\d w\int_{D_{a_2}}\d z
\left(\frac{1-w/a_2}{1-z/a_2}\right)^m\frac1{w-z}\\
&\qquad\times\exp\left(-\frac{sz^2}{2(1-s)}-a_2z+\frac{uz}{1-s}+\frac{tw^2}{2(1-t)}+a_2w-\frac{vw}{1-t}\right),\notag\\
\B_{t,v}^2(x)&=\frac{d\sqrt a}{(2\pi i)^2\sqrt{1-t}} \int_{i\R}\d w\int_{D_{a_2}}\d z
\left(1-z/a_1\right)^n\left(\frac{1-w/a_2}{1-z/a_2}\right)^m\frac1{z-w}\\
&\qquad\times\exp\left(\frac{tw^2}{2(1-t)}+a_2w-\frac{vw}{1-t}-axz\right),\notag\\
\beta_{t,v}^2(x)&=\frac{d\sqrt a}{2\pi i\sqrt{1-t}} \int_{i\R}\d w \left(1-\frac w{a_1}\right)^n
\exp\left(\frac{tw^2}{2(1-t)}+a_2w-\frac{vw}{1-t}-axw\right),\\
\C_{s,u}^2(y)&=\frac{d\sqrt a}{(2\pi i)^2\sqrt{1-s}} \int_{D_{a_2}}\d z\int_{D_{a_1}}\d w
\left(\frac1{1-w/a_1}\right)^n\left(\frac{1-w/a_2}{1-z/a_2}\right)^m\frac1{z-w}\\
&\qquad\times\exp\left(-\frac{sz^2}{2(1-s)}-a_2z+\frac{uz}{1-s}+ayw\right),\notag\\
M_0^2(x,y)&=\frac a{(2\pi i)^2} \int_{D_{a_2}}\d z\int_{D_{a_1}}\d w
\left(\frac{1-z/a_1}{1-w/a_1}\right)^n\left(\frac{1-w/a_2}{1-z/a_2}\right)^m\frac1{w-z} e^{-axz+ayw}.
\end{align}
Furthermore, let
\begin{equation}
q(s,u,t,v)=\frac1{\sqrt{2\pi(t-s)}}\exp\left(-\frac{(u-v)^2}{2(t-s)}+\frac{u^2}{2(1-s)}-\frac{v^2}{2(1-t)}\right)\Id(s<t)
\end{equation}
denote the conjugated Brownian kernel.

The theorem below is a consequence of Theorem 1.4 in~\cite{Joh10}.

\begin{theorem}[Johansson 2011]\label{thm:joh}
The extended determinantal kernel for $n+m$ non-colliding Brownian motions with two starting points and two endpoints described above can be written as
\begin{equation}\begin{aligned}
\L_{n,m}(s,u,t,v)=&-q(s,u,t,v)\\
&+d^{-2}\A_{s,u,t,v}^1+d^{-2}\langle(\B_{t,v}^1+\beta_{t,v}^1),(\Id-M_0^1)^{-1}\C_{s,u}^1\rangle_{L^2((1,\infty))}\\
&+d^{-2}\A_{s,u,t,v}^2+d^{-2}\langle(\B_{t,v}^2+\beta_{t,v}^2),(\Id-M_0^2)^{-1}\C_{s,u}^2\rangle_{L^2((1,\infty))}.
\end{aligned}\label{joh}\end{equation}
\end{theorem}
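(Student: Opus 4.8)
The plan is to read off the statement from Theorem~1.4 of \cite{Joh10}, which already furnishes the extended correlation kernel of precisely this system: $n+m$ mutually non-colliding Brownian motions on $(0,1)$ with the $n$-cluster pinned at $a_1$ at both endpoints and the $m$-cluster pinned at $a_2$ at both endpoints. Johansson's derivation itself proceeds via the Karlin--McGregor formula for the non-intersection probability, the Eynard--Mehta (biorthogonal-ensemble) theorem for the resulting determinantal structure, and a confluence of the $n$ (resp.\ $m$) coinciding endpoints --- this last step is what produces the powers $(1-w/a_1)^n$ and $(1-w/a_2)^m$ inside the contour integrals. I would first recall his formula verbatim and set up the dictionary between his notation and the symbols $\A^i,\B^i,\beta^i,\C^i,M_0^i$ here.

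The two changes between his formulation and \eqref{joh} are both deterministic, determinant-preserving reorganizations, so they leave every correlation function --- hence every gap probability in Theorem~\ref{thm:main} --- untouched. The first is a conjugation: replacing a kernel $\L(s,u,t,v)$ by $\frac{g(s,u)}{g(t,v)}\L(s,u,t,v)$ changes nothing, and choosing $g(s,u)=\exp\bigl(\tfrac{u^2}{2(1-s)}\bigr)$ turns the free heat kernel $\frac1{\sqrt{2\pi(t-s)}}e^{-(u-v)^2/2(t-s)}$ into the conjugated Brownian kernel $q$; the endpoint-dependent Doob $h$-transform factors get absorbed into $\B^i,\beta^i,\C^i$, which is why those ingredients carry their exponential prefactors while $M_0^i$ does not. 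The second is a bookkeeping of the double contour integrals: combining the Laplace/Fourier representation of the heat kernel with the fixed-endpoint data and using that the endpoints sit in the two clusters $a_1,a_2$, the residue structure forces one integration variable onto a small circle $D_{a_1}$ or $D_{a_2}$ and the other onto $i\R$ (for the $\A$- and $\B$-type terms, which involve a starting point of a Brownian motion) or onto the complementary small circle (for the $\C$- and $M_0$-type terms). I would then verify that the $d^{-2}$ prefactors cancel the $d$-powers hidden in $\A^i,\B^i,\C^i$, so that $d$ can indeed be retained as a free gauge parameter to be fixed advantageously in the later asymptotic analysis.

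The half-line $L^2((1,\infty))$, the resolvent $(\Id-M_0^i)^{-1}$, and the pairings $\langle(\B^i+\beta^i),(\Id-M_0^i)^{-1}\C^i\rangle$ arise from resumming the biorthogonal family into closed form; here I would only need to note that $M_0^i$ is trace class on $L^2((1,\infty))$ with $\Id-M_0^i$ invertible there, so the pairings are well defined --- this is part of Johansson's statement and need not be reproved. The splitting into an $i=1$ and an $i=2$ contribution simply reflects the symmetry of the model under swapping the two clusters, and Johansson's formula is already organized into this family-$1$ plus family-$2$ sum.

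The main obstacle is entirely the reconciliation of conventions: matching the normalization of the Gaussian, the orientations and (small) radii of $D_{a_1}$ and $D_{a_2}$ so that they enclose only the intended singularities, the precise placement of the gauge parameter $d$, and --- most delicately --- checking that the single conjugation factor $g$ is consistent simultaneously across all five ingredients and, in particular, acts trivially on $M_0^i$, so that the resolvent $(\Id-M_0^i)^{-1}$ really closes after conjugation. Once this dictionary is pinned down, what remains are routine residue computations and algebraic simplifications.
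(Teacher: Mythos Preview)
The paper does not supply a proof of this theorem at all: it is stated verbatim as ``a consequence of Theorem~1.4 in~\cite{Joh10}'' and is used as a black box, with only the subsequent remark that $\det(\Id-M_0^i)>0$ (Lemma~1.2 of~\cite{Joh10}) ensures the resolvents make sense. Your plan---start from Johansson's formula, perform a conjugation by $g(s,u)=\exp\bigl(u^2/(2(1-s))\bigr)$ to produce the $q$-term, and then match the contour-integral ingredients with the symbols $\A^i,\B^i,\beta^i,\C^i,M_0^i$ while tracking the free gauge parameter $d$---is exactly the implicit content of that one-line attribution, so there is nothing to compare: you are spelling out what the paper takes for granted.
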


\begin{remark}
It is proved in Lemma 1.2 of~\cite{Joh10} that $\det(\Id-M_0^1)_{L^2((1,\infty))}>0$ and \mbox{$\det(\Id-M_0^2)_{L^2((1,\infty))}>0$} so that \eqref{joh} makes sense.
\end{remark}

\begin{remark}\label{rem:discrsymm}
This configuration has a natural symmetry. By reflecting the vertical direction, one observes the same process with parameters modified as follows:
\begin{equation}
n\leftrightarrow m, \quad a_1\to-a_2, \quad a_2\to-a_1,\quad u\to-u, \quad v\to-v,
\label{discrtransform}\end{equation}
$s$ and $t$ are unchanged. It is easy to check that the ingredients $\A_{s,u,t,v}^1, \B_{t,v}^1, \beta_{t,v}^1, \C_{s,u}^1$ and $M_0^1$
of the kernel of the finite system in \eqref{defA1}--\eqref{defM01} transform to their counterparts with upper index $2$
after taking the change of parameters \eqref{discrtransform} and that $q(s,u,t,v)$ is invariant under this action.

The symmetry of the limiting tacnode process established in Remark~\ref{rem:contsymm} is a consequence of the discrete symmetry.
\end{remark}

\section{Proof of the main results}
First, we give the proof of Theorem~\ref{thm:main} using two lemmas which are proved in Section~\ref{s:asymptanal}.
We start with formula \eqref{joh}, and we apply it to our present setting.
Then, we perform asymptotic analysis for the functions obtained in this way.

The appropriate order of the parameter $d$ is
\begin{equation}
d=\frac{n^{-1/12}}{\sqrt2},\label{defd}
\end{equation}
since we want $d^2\L_{n,\lambda n}$ to converge, so $d^2$ is the scaling of the space variables, see \eqref{defus}.

The strategy of the proof is that first, we establish pointwise convergence of the elements of the kernel to the appropriate functions in Lemma~\ref{lem:ptwconv}.
Then, we give uniform bounds on the functions in Lemma~\ref{lem:bounds}.
This gives, using dominated convergence, that the kernel $\L_{n,\lambda n}$ under the scaling \eqref{defa1}--\eqref{defus} converges pointwise to $\L_{\tac}^{\sigma,\lambda}$.
It turns out that this convergence is uniform on compact sets. Dominated convergence ensures that also the gap probabilities expressed by Fredholm determinants converge.

\begin{lemma}\label{lem:ptwconv}
Under the scaling given by \eqref{defa1}--\eqref{defus}, the following pointwise limits hold as $n\to\infty$.
\begin{align}
\A_{s,u,t,v}^1&\to K_{\Ai}^{(-\tau_1,\tau_2)}(\sigma+\xi_1,\sigma+\xi_2),\label{A1conv}\\
n^{-1/3}\B_{t,v}^1(1+xn^{-2/3})&\to-(1+\sqrt\lambda)^{1/2} B_{\tau_2,\sigma+\xi_2}^\lambda(\wt\sigma+\wt x),\label{B1conv}\\
n^{-1/3}\beta_{t,v}^1(1+xn^{-2/3})&\to(1+\sqrt\lambda)^{1/2} b_{\tau_2,\sigma+\xi_2}^\lambda(\wt\sigma+\wt x),\\
n^{-1/3}\C_{s,u}^1(1+yn^{-2/3})&\to-(1+\sqrt\lambda)^{1/2} B_{-\tau_1,\sigma+\xi_1}^\lambda(\wt\sigma+\wt y),\\
n^{-2/3}M_0^1(1+xn^{-2/3},1+yn^{-2/3})&\to\lambda^{1/6}(1+\sqrt\lambda)^{2/3}K_{\Ai}(\wt\sigma+\wt x,\wt\sigma+\wt y)\label{M01conv}.
\end{align}
where $\wt x=\lambda^{1/6}(1+\sqrt\lambda)^{2/3} x$ and $\wt y=\lambda^{1/6}(1+\sqrt\lambda)^{2/3} y$ and see \eqref{defsigmatilde}.

The analogue for the second set of terms is
\begin{align}
\A_{s,u,t,v}^2&\to\lambda^{1/6} K_{\Ai}^{(-\lambda^{1/3}\tau_1,\lambda^{1/3}\tau_2)}(\lambda^{2/3}\sigma-\lambda^{1/6}\xi_1,\lambda^{2/3}\sigma-\lambda^{1/6}\xi_2),\label{A2conv}\\
n^{-1/3}\B_{t,v}^2(1+xn^{-2/3})&\to-\lambda^{1/6}(1+\sqrt\lambda)^{1/2} B_{\lambda^{1/3}\tau_2,\lambda^{2/3}\sigma-\lambda^{1/6}\xi_2}^{\lambda^{-1}}(\wt\sigma+\wt x),\\
n^{-1/3}\beta_{t,v}^2(1+xn^{-2/3})&\to\lambda^{1/6}(1+\sqrt\lambda)^{1/2} b_{\lambda^{1/3}\tau_2,\lambda^{2/3}\sigma-\lambda^{1/6}\xi_2}^{\lambda^{-1}}(\wt\sigma+\wt x),\\
n^{-1/3}\C_{s,u}^2(1+yn^{-2/3})&\to-\lambda^{1/6}(1+\sqrt\lambda)^{1/2} B_{-\lambda^{1/3}\tau_1,\lambda^{2/3}\sigma-\lambda^{1/6}\xi_1}^{\lambda^{-1}}(\wt\sigma+\wt y),\\
n^{-2/3}M_0^1(1+xn^{-2/3},1+yn^{-2/3})&\to\lambda^{1/6}(1+\sqrt\lambda)^{2/3}K_{\Ai}(\wt\sigma+\wt x,\wt\sigma+\wt y).
\label{M02conv}
\end{align}
The convergence is uniform for $\xi_1$ and $\xi_2$ in a compact subset of $\R$.
\end{lemma}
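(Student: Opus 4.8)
The plan is to prove each of the ten limits by a steepest-descent (saddle-point) analysis of the corresponding multiple contour integral of Theorem~\ref{thm:joh}. First I would substitute the scalings \eqref{defa1}--\eqref{defus} together with \eqref{defd} into the integrands \eqref{defA1}--\eqref{defM01} and their upper-index-$2$ counterparts, and read off the $n$-dependence of the exponents: every factor $(1\mp z/a_i)^{\pm n}$, $(1\mp z/a_i)^{\pm m}$ is $\exp$ of $\pm n\log(\cdots)$ or $\pm m\log(\cdots)$, which I Taylor-expand using $a_1=-(\sqrt n+\tfrac\sigma2 n^{-1/6})$, $a_2=\sqrt\lambda(\sqrt n+\tfrac\sigma2 n^{-1/6})$, $m=\lambda n$ and $1-s,1-t\to\tfrac12$. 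A direct computation shows the terms of order $\sqrt n\,z$ and of order $z^2$ cancel in each integrand, so that after zooming to the (coalescing) saddle near the origin, $z\sim n^{1/6}$ and $w\sim n^{1/6}$ with scale factors involving $(1+\sqrt{\lambda^{-1}})^{1/3}$ (which encodes the distinct curvatures of the two limit ellipses via $a/a_2$ and $a/|a_1|$), the surviving part of the exponent in each variable is the Airy cubic $\pm W^3/3$ plus a quadratic term proportional to $\tau$ and a linear term proportional to $\sigma+\xi$; the inner variables enter through the matching $1+xn^{-2/3}\leftrightarrow\wt\sigma+\wt x$, which is precisely \eqref{defsigmatilde} with $\wt x=\lambda^{1/6}(1+\sqrt\lambda)^{2/3}x$ (and likewise for $y$).

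For the single integrals $\beta^1,\beta^2$ a one-variable steepest-descent argument, deforming $i\R$ to a contour through the relevant saddle, turns the integral into $\tfrac1{2\pi i}\int e^{W^3/3+\tau W^2-(\ldots)W}\,\d W$, one of the integral representations of the extended Airy function $\Ai^{(\cdot)}$; collecting the prefactor $d\sqrt a/\sqrt{1-t}\sim n^{1/6}(1+\sqrt\lambda)^{1/2}$ and the normalizing $n^{-1/3}$ yields $b^\lambda_{\tau_2,\sigma+\xi_2}$ up to the stated constants and sign. For the double integrals $\A^i$, $\B^i$, $\C^i$, $M_0^i$ I would deform both contours simultaneously to their steepest-descent paths through the respective saddles; once $\Re(w-z)$ has a fixed sign on the deformed contours the factor $1/(w-z)$ (resp.\ $1/(z-w)$) is written as $\pm\int_0^\infty e^{\mp\mu(w-z)}\,\d\mu$, the $z$- and $w$-integrals then factor, and one is left with $\int_0^\infty\Ai^{(\cdot)}(\cdots+\mu)\Ai^{(\cdot)}(\cdots+\mu)\,\d\mu$: this is $K_{\Ai}^{(-\tau_1,\tau_2)}$ for $\A^1$, the standard $K_{\Ai}$ for $M_0^i$, and the ``half-Airy'' kernel $B^\lambda_{\tau,\xi}$ for $\B^i,\C^i$ (the inner variable sits inside only one of the two Airy functions, which produces the factor $\Ai(x+\mu)$ and the asymmetric $(1+\sqrt{\lambda^{-1}})^{1/3}$ in its first argument). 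The remaining numerical prefactors ($d^{\pm2}$, $1/(2\pi i)^2$, $\sqrt a$, $\sqrt{1-s}$, the Jacobian $n^{1/3}$ from $\d z\,\d w$ and $n^{-1/6}$ from $1/(w-z)$) and the overall signs are then pure bookkeeping. The second-family limits \eqref{A2conv}--\eqref{M02conv} need not be recomputed: setting $a_2=\sqrt\lambda(\cdots)$, $m=\lambda n$ is exactly the parameter change of Remark~\ref{rem:discrsymm}, so the first-family computation turns into the second-family one with $\lambda\mapsto\lambda^{-1}$, $\tau\mapsto\lambda^{1/3}\tau$, $\xi\mapsto-\lambda^{1/6}\xi$, $\sigma\mapsto\lambda^{2/3}\sigma$.

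The main obstacle is the rigorous justification of the contour deformations and the localization near the saddle. The circles $D_{a_1},D_{a_2}$ and the lines $i\R$ must be pushed to steepest-descent contours through the coalescing saddle while avoiding the branch points $z=a_1,a_2$ of the power functions; this requires choosing the contours so that $\Re$ of the exponent is strictly decreasing away from the saddle, including on the portions that skirt the $a_i$ where $\mp n\log(1\mp z/a_i)$ is large, and when the $w$-contour is moved across the $z$-contour the residue of $1/(w-z)$ at $w=z$ must be accounted for — this is precisely the mechanism that produces the $\int_0^\infty$ in the extended Airy and half-Airy kernels. On the deformed contours one then needs, uniformly in $n$ and in $\xi_1,\xi_2$ in a fixed compact set, that the part away from the saddle is exponentially negligible and that the local piece converges to the Airy integral; for the pointwise statement of this lemma it is enough to control the $z^4/n$-type remainders of the Taylor expansion on the local piece and pass to the limit by dominated convergence, the sharper global decay bounds needed later for the Fredholm determinant limit being the content of Lemma~\ref{lem:bounds}. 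Uniformity in $\xi_1,\xi_2$ is automatic, since the saddle location and all the estimates depend continuously, hence locally uniformly, on these parameters.
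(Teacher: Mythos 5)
Your plan follows the paper's proof quite closely: recenter the contours at the coalescing double critical point by the change of variables $w\to aw/(1+\sqrt\lambda)$, $z\to -az/(1+\sqrt\lambda)$; Taylor-expand to expose pure-cubic leading exponents $n\,w^3/3$, $n\,\lambda^{-1/2}w^3/3$, etc.\ (the linear and quadratic terms cancel, which is the tacnode condition); use the exponential-integral identity to separate the $z$- and $w$-integrals; perform a local steepest-descent to reduce each factor to the Airy integral $\frac{1}{2\pi i}\int\exp(aw^3/3+bw^2+cw)\,\d w$; and invoke Remark~\ref{rem:discrsymm} to transfer the first-family limits to the second family. This is exactly the paper's route, and the bookkeeping you describe for the prefactors, the $\wt\sigma+\wt x$ matching, and the appearance of $(1+\sqrt{\lambda^{-1}})^{1/3}$ as the curvature asymmetry is correct.

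One claim in your last paragraph is incorrect, however, and contradicts what you say earlier. You assert that ``when the $w$-contour is moved across the $z$-contour the residue of $1/(w-z)$ at $w=z$ must be accounted for --- this is precisely the mechanism that produces the $\int_0^\infty$.'' It is not. After the change of variables, the $z$-contour is a circle through the origin contained in $\{\Re z\ge 0\}$ (e.g.\ $D_1$, or $D_{\sqrt\lambda}$ for $w$ in $M_0^1$), while the $w$-contour is $i\R$ or again such a circle, so $\Re(z+w)\ge 0$ on the whole product contour and no crossing occurs; the $\int_0^\infty\d\mu$ in $K_{\Ai}^{(\cdot,\cdot)}$, $K_{\Ai}$ and $B^\lambda$ comes entirely from the identity $1/(z+w)=n^{1/3}\int_0^\infty e^{-n^{1/3}\mu(z+w)}\,\d\mu$, which you already use correctly one paragraph earlier. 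If you actually picked up a residue you would get an extraneous additive term that does not appear in the limiting kernel, so that sentence should be deleted. With that correction your outline matches the paper's argument.
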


\begin{lemma}\label{lem:bounds}
There are constants $c,C>0$ such that for all $x,y\ge0$, we have the following bounds
\begin{align}
\left|\A_{s,u,t,v}^i\right|&\le C\label{Abound}\\
\left|n^{-1/3}\B_{t,v}^i(1+xn^{-2/3})\right|&\le Ce^{-cx}\label{Bbound}\\
\left|n^{-1/3}\beta_{t,v}^i(1+xn^{-2/3})\right|&\le Ce^{-cx}\label{betabound}\\
\left|n^{-1/3}\C_{s,u}^i(1+yn^{-2/3})\right|&\le Ce^{-cy}\label{Cbound}\\
\left|n^{-2/3}M_0^i(1+xn^{-2/3},1+yn^{-2/3})\right|&\le Ce^{-c(x+y)}\label{Mbound}
\end{align}
for $i=1,2$. These bounds are uniform for $\xi_1$ and $\xi_2$ in a compact subset of $\R$.
\end{lemma}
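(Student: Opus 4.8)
\textbf{Proof plan for Lemma~\ref{lem:bounds}.} The five estimates will be obtained by a steepest-descent analysis of the contour-integral representations \eqref{defA1}--\eqref{defM01}, carried out uniformly in $n$ (for $n$ large) and in $\xi_1,\xi_2$ on a compact set. By the reflection symmetry recorded in Remark~\ref{rem:discrsymm}, the ingredients with upper index $2$ are obtained from those with upper index $1$ via the change of parameters \eqref{discrtransform}, which under the scaling \eqref{defa1}--\eqref{defa2} amounts to replacing $\lambda$ by $\lambda^{-1}$ (and $\sigma$, $\tau_i$, $\xi_i$ accordingly); since $\lambda>0$ is fixed, it therefore suffices to prove \eqref{Abound}--\eqref{Mbound} for $i=1$. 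Throughout, $\tau_1,\tau_2,\sigma,\lambda$ are fixed and $c,C$ may depend on them and on the compact $\xi$-set.

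First I would insert \eqref{defa1}--\eqref{defa2}, \eqref{defd}, the time/space scaling \eqref{defus} and $x\mapsto1+xn^{-2/3}$, $y\mapsto1+yn^{-2/3}$, and rescale the integration variables by a factor $n^{1/6}$ about the origin, which is where the relevant (degenerate) critical point of each exponent sits. For $\A^1$ the diverging contributions of order $n^{2/3}$ and $n^{1/3}$ coming from $\bigl((1-w/a_1)/(1-z/a_1)\bigr)^n$ cancel against those of the Gaussian factors, leaving after rescaling an extended-Airy exponent cubic in the new variable $\zeta$ (resp.\ $\omega$), with the quadratic and linear coefficients governed by $-\tau_1$ at $\sigma+\xi_1$ (resp.\ $\tau_2$ at $\sigma+\xi_2$), plus an error that is $O(n^{-1/3})$ on compact $\zeta$-sets and of the right sign beyond. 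For $\B^1,\beta^1,\C^1,M_0^1$ the extra factors $(1-z/a_2)^m$, $e^{axz}$, $e^{-ayw}$ (with $m=\lambda n$, $a=a_2-a_1\sim(1+\sqrt\lambda)\sqrt n$) are included; again the $n^{2/3}$- and $n^{1/3}$-terms cancel, now crucially using $a=a_2-a_1$, and one is left with a cubic exponent $-\tfrac13(1+\sqrt{\lambda^{-1}})\zeta^3$ plus lower-order terms in which the rescaled $L^2$-variable $x$ (resp.\ $y$) enters linearly, exactly matching the limiting functions $B^\lambda_{\tau,\xi},b^\lambda_{\tau,\xi}$ and the Airy kernel.

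Next I would deform the circles $D_{a_1},D_{a_2}$ (and bend $i\R$) so that near the origin each contour passes through its critical point along the steepest-descent directions of the cubic term, and then closes up around $a_1\sim-\sqrt n$ (resp.\ $a_2\sim\sqrt\lambda\,n^{1/2}$); no poles are crossed, since the factors $(1-\cdot/a_j)^{\pm n},(1-\cdot/a_j)^{\pm m}$ are analytic off $a_j$, and for the double integrals one keeps a fixed gap between the $z$- and $w$-contours to avoid the pole of $1/(z-w)$ (or, where convenient, collects the residue, which produces a single integral of the same type obeying the same bound). On the deformed contours the integrand is bounded in modulus by $C\exp(\Re(\mathrm{exponent}))$, where $\Re(\mathrm{exponent})$ is maximal at the critical point, decays like $-c|\zeta|^3$ in the central region, and is driven to $-\infty$ at rate $\gtrsim n^{1/3}$ on the outer loop (where it is then controlled by elementary estimates on $|(1\pm\cdot/a_j)^n|$ together with the Gaussian factor). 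Splitting each contour into a bounded central piece and the tails, the tails are $o(1)$ uniformly and the central piece is a uniformly convergent Gaussian-type integral, hence $O(1)$; this yields \eqref{Abound}. For \eqref{Bbound}--\eqref{Mbound} the critical point depends on the $L^2$-variable: choosing the critical point at which the exponent is minimal, whose value behaves like $-c_1x^{3/2}$ for large $x$ (resp.\ $-c_1(x^{3/2}+y^{3/2})$) and is bounded for bounded $x,y$, hence is $\le C-cx$ (resp.\ $\le C-c(x+y)$) for all $x,y\ge0$, and noting that the remaining integral stays $O(1)$, gives the stated exponential decay. For $\beta^1$ this Step is a one-dimensional steepest-descent estimate. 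Uniformity in $\xi_1,\xi_2$ is automatic since the critical points, contours and estimates depend continuously on these parameters.

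The main obstacle will be to carry out the previous two steps \emph{simultaneously}: the deformed contours must be genuine steepest-descent paths for the $n$-dependent part of the exponent \emph{and}, at the same time, must deliver honest exponential decay in $x,y$ with a rate $c>0$ that does not degrade as $n\to\infty$. The factors producing this decay — e.g.\ $e^{axz}$ with $a\sim(1+\sqrt\lambda)\sqrt n$ and $z$ near a critical point lying at distance of order $n^{1/6}$ from the origin — are themselves $n$-dependent, so one has to verify that the contour choice yielding the $x^{3/2}$-decay neither spoils the $O(1)$ bound on the remaining integrand nor forces the contour across $a_1$, $a_2$ or the diagonal $z=w$. Keeping every error term in the exponent uniformly bounded — in $n$, in $x,y\ge0$, and in $\xi_1,\xi_2$ on a compact set — along the \emph{entire} deformed contour, not merely on its central portion, is the technical heart of the lemma.
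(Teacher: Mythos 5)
Your plan is correct in its essentials and follows the same route as the paper: reduce to $i=1$ via the reflection symmetry of Remark~\ref{rem:discrsymm}, insert the scaling \eqref{defa1}--\eqref{defus}, deform contours to pass through the (parameter-dependent) critical point along steepest-descent directions, and extract the exponential decay in $x,y$ from the value of the exponent at that critical point, which indeed behaves like $-\tfrac23 s^{3/2}$ for moderately large $s$. You also correctly flag as the technical crux the interplay between the $n$-dependent exponent and the requirement that the decay rate $c$ not degrade as $n\to\infty$; the paper handles this by treating separately the regimes $\wt s=n^{-2/3}s\le\varepsilon$ and $\wt s>\varepsilon$ with $\alpha$ capped at $\varepsilon^{1/2}$.

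The one structural point where your plan differs, and where the paper's route is noticeably cleaner, is the treatment of the singular factor $1/(z\mp w)$. You propose either keeping a fixed gap between the $z$- and $w$-contours or collecting residues, both of which keep the integral genuinely two-dimensional (or produce extra residue terms that must be estimated separately). The paper instead exploits that, after the change of variables $w\to aw/(1+\sqrt\lambda)$, $z\to-az/(1+\sqrt\lambda)$, one has $\Re(z+w)>0$ on the contours, so
\begin{equation}
\frac1{z+w}=n^{1/3}\int_0^\infty e^{-n^{1/3}\mu(z+w)}\,\d\mu
\end{equation}
completely decouples the $z$- and $w$-integrals. Every ingredient of the kernel then reduces to a $\mu$-integral of a \emph{product} of one-dimensional contour integrals, each of the universal form $\frac{n^{1/3}}{2\pi i}\int_\gamma \exp(nf_0+n^{2/3}f_1+n^{1/3}f_2)$, and the entire Lemma~\ref{lem:bounds} becomes a corollary of a single reusable estimate (Lemma~\ref{lem:examplebound}) applied with different $f_0$'s and steep-descent paths as in \eqref{defg0A}, \eqref{defg0B}, \eqref{deff0C}. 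This factorization removes precisely the issue you identify as the "technical heart"--keeping the error control uniform along the entire deformed two-dimensional contour--by reducing it to a one-dimensional problem that you only have to solve once. You should either adopt this decoupling or explain concretely how the gap/residue bookkeeping avoids the combinatorial overhead of a two-variable steepest-descent estimate.

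A further detail worth checking in your version: when you deform the $z$-contour through the $x$-dependent critical point, that contour must still encircle $a_1$ (where $(1-z/a_1)^{-n}$ has a pole), and for $\C^1$ and $M_0^1$ the $w$-contour must still encircle $a_2$. The paper's choice of circles $D_1$ (radius $1$ through $0$) and $D_{\sqrt\lambda}$ (radius $\sqrt\lambda$ through $0$), slightly modified near $0$ to match the local steepest-descent wedge, does this automatically; a gap-keeping deformation needs a separate argument that the gap, the encirclement, and the steep-descent property can be maintained simultaneously and uniformly in $x,y\ge0$.
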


\begin{proof}[Proof of Theorem~\ref{thm:main}]
First, we show that, with the scaling \eqref{defa1}--\eqref{defus} and \eqref{defd}, we have
\begin{equation}
d^2\L_{n,\lambda n}(s,u,t,v)\lto\L_{\tac}^{\lambda,\sigma}(\tau_1,\xi_1,\tau_2,\xi_2).\label{thmrefr}
\end{equation}
The convergence of $d^2q(s,t,u,v)$ to the first term on the right-hand side of \eqref{defLL} is obvious.
By \eqref{A1conv} and \eqref{A2conv}, it is enough to work with the scalar products in \eqref{joh}.

As in the original formulation in~\cite{Joh10}, we write
\begin{equation}\label{scalarprodrew}
\langle(\B_{t,v}^1+\beta_{t,v}^1),(\Id-M_0^1)^{-1}\C_{s,u}^1\rangle_{L^2((1,\infty))}
=\frac{\det(\Id-M_0^1+(\B_{t,v}^1+\beta_{t,v}^1)\otimes\C_{s,u}^1)_{L^2((1,\infty))}}{\det(\Id-M_0^1)_{L^2((1,\infty))}}.
\end{equation}
For this proof, let
\begin{equation}\D(x,y)=M_0^1(x,y)-(\B_{t,v}^1(x)+\beta_{t,v}^1(x))C_{s,u}^1(y).\end{equation}
Then the Fredholm determinant in the numerator of \eqref{scalarprodrew} can be expressed as
\begin{equation}\sum_{m=0}^\infty \frac{(-1)^m}{m!}\int_{[1,\infty)^m} \det(\D(\rho_i,\rho_j))_{1\le i,j\le m}\,\d^m\rho.\end{equation}
This is equal to
\begin{equation}\label{DFredholm}
\sum_{m=0}^\infty \frac{(-1)^m}{m!}\int_{[0,\infty)^m}
\det\left(\frac{n^{-2/3}}{\lambda^{\frac16}(1+\sqrt\lambda)^{\frac23}}
\D\left(1+\frac{x_in^{-2/3}}{\lambda^{\frac16}(1+\sqrt\lambda)^{\frac23}},1+\frac{x_jn^{-2/3}}{\lambda^{\frac16}(1+\sqrt\lambda)^{\frac23}}\right)\right)\,\d^m x
\end{equation}
after the change of variables $\rho_i=1+x_in^{-2/3}\lambda^{-1/6}(1+\sqrt\lambda)^{-2/3}$.

Using the pointwise convergence in \eqref{B1conv}--\eqref{M01conv} along with the bounds \eqref{Bbound}--\eqref{Mbound}
and Hadamard bound on the determinant\footnote{Hadamard bound:
the absolute value of a determinant of a $n\times n$ matrix with entries of absolute value not exceeding $1$ is bounded by $n^{n/2}$.},
the dominated convergence theorem implies that the numerator of \eqref{scalarprodrew} converges to
\begin{equation}\label{Dlim}
\sum_{m=0}^\infty \frac{(-1)^m}{m!}\int_{[0,\infty)^m} \det(D(x_i,x_j))_{1\le i,j\le m}\,\d^m x
\end{equation}
with
\begin{multline}
D(x,y)=K_{\Ai}(\wt\sigma+x,\wt\sigma+y)\\
-(1+\sqrt{\lambda^{-1}})^{-1/3}\left(B_{\tau_2,\sigma+\xi_2}^\lambda(\wt\sigma+x)-b_{\tau_2,\sigma+\xi_2}^\lambda(\wt\sigma+x)\right)
B_{-\tau_1,\sigma+\xi_1}^\lambda(\wt\sigma+y).
\end{multline}
A similar argument can be used for the denominator of \eqref{scalarprodrew}, which gives that the expression in
\eqref{scalarprodrew} converges to the second term on the right-hand side of \eqref{defLtac}.

In the same way one shows that
\begin{equation}\label{scalarprod2}
\langle(\B_{t,v}^2+\beta_{t,v}^2),(\Id-M_0^2)^{-1}\C_{s,u}^2\rangle_{L^2((1,\infty))}
\end{equation}
converges to the scalar product appearing in the last term of \eqref{defLL} using the remaining set of assertions in Lemma~\ref{lem:ptwconv} and also Lemma~\ref{lem:bounds}.
Alternatively, one can refer to the symmetry established in Remark~\ref{rem:discrsymm} to get the limit of \eqref{scalarprod2}.
This verifies \eqref{thmrefr}.

It remains to argue that the process $\T^{\sigma,\lambda}$ exists as a determinantal point process. For this, we give a uniform bound on $d^2\L_{n,\lambda n}(s,u,t,v)$
as $\xi_1$ and $\xi_2$ are from a compact subset of $\R$.

For $d^2q(s,u,t,v)$, the assertion is clear, for $\A_{s,u,t,v}^i$, it follows from \eqref{Abound}. The two scalar products in \eqref{joh} are bounded as follows.
We again consider the right-hand side of \eqref{scalarprodrew}. In the Fredholm expansion of the numerator \eqref{DFredholm}, we get the bound
\begin{equation}
\left|\frac{n^{-2/3}}{\lambda^{\frac16}(1+\sqrt\lambda)^{\frac23}}
\D\left(1+\frac{x_in^{-2/3}}{\lambda^{\frac16}(1+\sqrt\lambda)^{\frac23}},1+\frac{x_jn^{-2/3}}{\lambda^{\frac16}(1+\sqrt\lambda)^{\frac23}}\right)\right|
\le Ce^{-c(x_i+x_j)}
\end{equation}
with uniform $C,c>0$ on the compact subsets based on \eqref{Bbound}--\eqref{Mbound}.
Hence the convergence of \eqref{DFredholm} to \eqref{Dlim} is uniform as $\xi_1$ and $\xi_2$ are in a compact set.
The denominator of \eqref{scalarprodrew} is strictly positive by Lemma 1.2 of~\cite{Joh10} and it converges to
\begin{equation}
\det(\Id-M_0^1)_{L^2((1,\infty))}\lto\det(\Id-\chi_{\wt\sigma}K_{\Ai}\chi_{\wt\sigma})_{L^2((\wt\sigma,\infty))}=F_2(\wt\sigma)>0
\end{equation}
where $F_2$ is the \emph{Tracy-Widom distribution} function~\cite{TW94}. This shows that the first scalar product in \eqref{joh} remains uniformly bounded on compact sets.
One can proceed similarly with the second one. Therefore, the existence of the gap probabilities of the process $\T^{\sigma,\lambda}$
follows by expanding the Fredholm determinant of the finite size kernel and from the Hadamard bound on the determinant. This completes the proof of Theorem~\ref{thm:main}.
\end{proof}

The following proof is similar to that of Theorem 1.3 in \cite{AJvM11}, but the idea is adapted to the asymmetric case, so we give it completely.

\begin{proof}[Proof of Proposition \ref{prop:alternative}]
In order to rewrite the kernel in \eqref{defLL}, we define the function
\begin{equation}\label{defS}
S^{\lambda,\sigma}_{\tau,\xi}(x)=\Ai^{(\tau)}\left(\lambda^{-1/6}(\lambda^{1/6}\xi-\lambda^{2/3}\sigma)+(1+\sqrt{\lambda^{-1}})^{1/3}x\right)
=\lambda^{1/6} b^{\lambda^{-1}}_{\lambda^{1/3}\tau,\lambda^{2/3}\sigma-\lambda^{1/6}\xi}(x)
\end{equation}
and the operator $T$ on $L^2((\wt\sigma,\infty))$ with kernel function
\begin{equation}\label{defT}
T(x,y)=\Ai(x+y-\wt\sigma).
\end{equation}
One observes that since we have
\begin{equation}
K_{\Ai}(x,y)=\int_{\wt\sigma}^\infty \Ai(x+u-\wt\sigma)\Ai(y+u-\wt\sigma)\,\d u
\end{equation}
on $L^2((\wt\sigma,\infty))$, one can write
\begin{equation}
\chi_{\wt\sigma}K_{\Ai}\chi_{\wt\sigma}=T^2
\end{equation}
and also
\begin{equation}\label{kernelsum}
(\Id-\chi_{\wt\sigma}K_{\Ai}\chi_{\wt\sigma})^{-1}=\sum_{r=0}^\infty T^{2r}.
\end{equation}

Note also that, using the notations \eqref{defS} and \eqref{defT}, we have
\begin{equation}\label{Crewrite}
C^\lambda_{\tau,\sigma+\xi}(x)=\lambda^{1/6} S^{\lambda^{-1},\lambda^{2/3}\sigma}_{\lambda^{1/3}\tau,-\lambda^{1/6}\xi}(x)-TS^{\lambda,\sigma}_{\tau,\xi}(x).
\end{equation}
Similarly,
\begin{equation}\label{Crewrite2}
C^{\lambda^{-1}}_{\lambda^{1/3}\tau,\lambda^{2/3}\sigma-\lambda^{1/6}\xi}(x)
=\lambda^{-1/6} S^{\lambda,\sigma}_{\tau,\xi}(x)-TS^{\lambda^{-1},\lambda^{2/3}\sigma}_{\lambda^{1/3}\tau,-\lambda^{1/6}\xi}(x).
\end{equation}
One can also see easily that
\begin{equation}\label{Kscalarprod}
K_{\Ai}^{(-\tau_1,\tau_2)}(\sigma+\xi_1,\sigma+\xi_2)
=(1+\sqrt{\lambda^{-1}})^{1/3}\left\langle S^{\lambda,\sigma}_{-\tau_1,\xi_1},S^{\lambda,\sigma}_{\tau_2,\xi_2}\right\rangle_{L^2((\wt\sigma,\infty))},
\end{equation}
and also
\begin{multline}\label{Kscalarprod2}
K_{\Ai}^{(-\lambda^{1/3}\tau_1,\lambda^{1/3}\tau_2)}\left(\lambda^{2/3}\sigma-\lambda^{1/6}\xi_1,\lambda^{2/3}\sigma-\lambda^{1/6}\xi_2\right)\\
=(1+\sqrt\lambda)^{1/3}\left\langle S^{\lambda^{-1},\lambda^{2/3}\sigma}_{-\lambda^{1/3}\tau_1,-\lambda^{1/6}\xi_1},
S^{\lambda^{-1},\lambda^{2/3}\sigma}_{\lambda^{1/3}\tau_2,-\lambda^{1/6}\xi_2}\right\rangle_{L^2((\wt\sigma,\infty))}.
\end{multline}

Starting from \eqref{defLL} and \eqref{defLtac}, we can rewrite the kernel $\L_{\tac}^{\lambda,\sigma}$ using \eqref{defC} as follows.
\begin{equation}\begin{aligned}
&\hspace{-1.5em}\L_{\tac}^{\lambda,\sigma}(\tau_1,\xi_1,\tau_2,\xi_2)\\
=&-\Id(\tau_1<\tau_2)p(\tau_2-\tau_1;\xi_1,\xi_2)\\
&+K_{\Ai}^{(-\tau_1,\tau_2)}(\sigma+\xi_1,\sigma+\xi_2)
+\lambda^{1/6}K_{\Ai}^{(-\lambda^{1/3}\tau_1,\lambda^{1/3}\tau_2)}(\lambda^{2/3}\sigma-\lambda^{1/6}\xi_1,\lambda^{2/3}\sigma-\lambda^{1/6}\xi_2)\\
&+(1+\sqrt{\lambda^{-1}})^{1/3} \int_{\wt\sigma}^\infty (\Id-\chi_{\wt\sigma}K_{\Ai}\chi_{\wt\sigma})^{-1} C^\lambda_{\tau_2,\sigma+\xi_2}(x)
(C^\lambda_{-\tau_1,\sigma+\xi_1}(x)-b^\lambda_{-\tau_1,\sigma+\xi_1}(x))\,\d x\\
&+\lambda^{1/6}(1+\sqrt\lambda)^{1/3} \int_{\wt\sigma}^\infty (\Id-\chi_{\wt\sigma}K_{\Ai}\chi_{\wt\sigma})^{-1}
C^{\lambda^{-1}}_{\lambda^{1/3}\tau_2,\lambda^{2/3}\sigma-\lambda^{1/6}\xi_2}(x)\\
&\qquad\times(C^{\lambda^{-1}}_{-\lambda^{1/3}\tau_1,\lambda^{2/3}\sigma-\lambda^{1/6}\xi_1}(x)-b^{\lambda^{-1}}_{-\lambda^{1/3}\tau_1,\lambda^{2/3}\sigma-\lambda^{1/6}\xi_1}(x))\,\d x.
\end{aligned}\end{equation}
If we use \eqref{Kscalarprod} and \eqref{Kscalarprod2} for the first two Airy kernels, \eqref{Crewrite}, \eqref{Crewrite2} and \eqref{defS} for the two integrals
and \eqref{kernelsum}, then we get
\begin{equation}\label{LwithS}\begin{aligned}
\L_{\tac}^{\lambda,\sigma}(\tau_1,\xi_1,\tau_2,\xi_2)=&-\Id(\tau_1<\tau_2)p(\tau_2-\tau_1;\xi_1,\xi_2)\\
&+(1+\sqrt{\lambda^{-1}})^{1/3} \left\langle\sum_{r=0}^\infty T^{2r} S^{\lambda,\sigma}_{-\tau_1,\xi_1},S^{\lambda,\sigma}_{\tau_2,\xi_2}\right\rangle_{L^2((\wt\sigma,\infty))}\\
&+\lambda^{1/6}(1+\sqrt\lambda)^{1/3} \left\langle\sum_{r=0}^\infty T^{2r} S^{\lambda^{-1},\lambda^{2/3}\sigma}_{-\lambda^{1/3}\tau_1,-\lambda^{1/6}\xi_1},
S^{\lambda^{-1},\lambda^{2/3}\sigma}_{\lambda^{1/3}\tau_2,-\lambda^{1/6}\xi_2}\right\rangle_{L^2((\wt\sigma,\infty))}\\
&-(1+\sqrt\lambda)^{1/3} \left\langle\sum_{r=0}^\infty T^{2r+1} S^{\lambda,\sigma}_{-\tau_1,\xi_1},
S^{\lambda^{-1},\lambda^{2/3}\sigma}_{\lambda^{1/3}\tau_2,-\lambda^{1/6}\xi_2}\right\rangle_{L^2((\wt\sigma,\infty))}\\
&-(1+\sqrt\lambda)^{1/3} \left\langle\sum_{r=0}^\infty T^{2r+1} S^{\lambda^{-1},\lambda^{2/3}\sigma}_{-\lambda^{1/3}\tau_1,-\lambda^{1/6}\xi_1},
S^{\lambda,\sigma}_{\tau_2,\xi_2}\right\rangle_{L^2((\wt\sigma,\infty))}.
\end{aligned}\end{equation}
Note that the scalar products in the third and the fourth terms on the right-hand side of \eqref{LwithS} can be combined using \eqref{Crewrite},
respectively, the second and the fifth terms can be joined by \eqref{Crewrite2} yielding
\begin{equation}\begin{aligned}
\L_{\tac}^{\lambda,\sigma}(\tau_1,\xi_1,\tau_2,\xi_2)
=&-\Id(\tau_1<\tau_2)p(\tau_2-\tau_1;\xi_1,\xi_2)\\
&+(1+\sqrt\lambda)^{1/3} \left\langle (\Id-\chi_{\wt\sigma}K_{\Ai}\chi_{\wt\sigma})^{-1} C^\lambda_{-\tau_1,\sigma+\xi_1},
S^{\lambda^{-1},\lambda^{2/3}\sigma}_{\lambda^{1/3}\tau_2,-\lambda^{1/6}\xi_2}\right\rangle_{L^2((\wt\sigma,\infty))}\\
&+(1+\sqrt\lambda)^{1/3} \left\langle (\Id-\chi_{\wt\sigma}K_{\Ai}\chi_{\wt\sigma})^{-1} C^{\lambda^{-1}}_{-\lambda^{1/3}\tau_1,\lambda^{2/3}\sigma-\lambda^{1/6}\xi_1},
S^{\lambda,\sigma}_{\tau_2,\xi_2}\right\rangle_{L^2((\wt\sigma,\infty))}
\end{aligned}\end{equation}
which proves the proposition by \eqref{defS}.
\end{proof}

\section{Asymptotic analysis}\label{s:asymptanal}

\begin{proof}[Proof of Lemma~\ref{lem:ptwconv}]
We write down the ingredients of the kernel in Theorem~\ref{thm:joh} with the values given by \eqref{defa1}--\eqref{defa2}.
By Remark~\ref{rem:discrsymm}, it is enough to prove the first set of statements \eqref{A1conv}--\eqref{M01conv}.
Then, we use the method of saddle point analysis to get the limits in Lemma~\ref{lem:ptwconv}.
For the asymptotic analysis we use a standard pattern, explained e.g.\ in Section 6 of~\cite{BF08}.

For \eqref{defA1}, after change of variables $w\to aw/(1+\sqrt\lambda)$ and $z\to-az/(1+\sqrt\lambda)$, we get
\begin{equation}\begin{aligned}
\A_{s,u,t,v}^1=&\frac{d^2a}{(1+\sqrt\lambda)\sqrt{(1-s)(1-t)}}\frac1{(2\pi i)^2} \int_{i\R}\d w\int_{D_1}\d z
\left(\frac{1+w}{1-z}\right)^n\frac1{w+z}\\
&\times\exp\left(-\frac{a^2sz^2}{(1+\sqrt\lambda)^2 2(1-s)}-\frac{a^2z}{(1+\sqrt\lambda)^2}-\frac{auz}{(1+\sqrt\lambda)(1-s)}\right)\\
&\times\exp\left(\frac{a^2tw^2}{(1+\sqrt\lambda)^2 2(1-t)}-\frac{a^2w}{(1+\sqrt\lambda)^2}-\frac{avw}{(1+\sqrt\lambda)(1-t)}\right).
\end{aligned}\label{A1rew1}\end{equation}
First we observe that $\Re(z+w)>0$, hence we can write
\begin{equation}\frac1{z+w}=n^{1/3}\int_0^\infty e^{-n^{1/3}\mu(z+w)}\,\d\mu.\end{equation}
By substituting this and \eqref{defa}--\eqref{defus} and by Taylor expansion, we obtain
\begin{equation}\begin{aligned}
\A_{s,u,t,v}^1=&(n^{2/3}+o(1)) \int_0^\infty\d\mu\\
&\times\frac1{2\pi i}\int_{i\R}\d w \exp\left(n\left[\log(1+w)+\frac{w^2}2-w\right]+n^{2/3}\tau_2w^2-n^{1/3}[\sigma+\xi_2+\mu]w\right)\\
&\times\frac1{2\pi i}\int_{D_1}\d z \exp\left(n\left[-\log(1-z)-\frac{z^2}2-z\right]-n^{2/3}\tau_1z^2-n^{1/3}[\sigma+\xi_1+\mu]z\right)\\
&\times\exp\left(\O\left(n^{1/3}w^2+w+n^{1/3}z^2+z\right)\right).
\end{aligned}\label{A1comp}\end{equation}

A change of variables $w\to aw/(1+\sqrt\lambda)$ and $z\to-az/(1+\sqrt\lambda)$ in \eqref{defB1} gives
\begin{equation}\begin{aligned}
\B_{t,v}^1(x)=&\frac{da^{3/2}}{(1+\sqrt\lambda)\sqrt{1-t}}\frac1{(2\pi i)^2} \int_{i\R}\d w\int_{D_1}\d z
\left(\frac{1+w}{1-z}\right)^n\left(1+\frac z{\sqrt\lambda}\right)^{\lambda n}\frac1{z+w}\\
&\times\exp\left(\frac{a^2tw^2}{(1+\sqrt\lambda)^2 2(1-t)}-\frac{a^2w}{(1+\sqrt\lambda)^2}-\frac{avw}{(1+\sqrt\lambda)(1-t)}-\frac{a^2xz}{1+\sqrt\lambda}\right)
\end{aligned}\end{equation}
which yields
\begin{equation}\begin{aligned}
&n^{-1/3}\B_{t,v}^1(1+xn^{-2/3})=(n^{2/3}+o(1))(1+\sqrt\lambda)^{1/2} \int_0^\infty\d\mu\\
&\qquad\times\frac1{2\pi i}\int_{i\R}\d w \exp\left(n\left[\log(1+w)+\frac{w^2}2-w\right]-n^{2/3}\tau_2w^2-n^{1/3}[\sigma+\xi_2+\mu]w\right)\\
&\qquad\times\frac1{2\pi i}\int_{D_1}\d z \exp\left(n\left[\lambda\log\left(1+\frac z{\sqrt\lambda}\right)-\log(1-z)-(1+\sqrt\lambda)z\right]\right)\\
&\qquad\times\exp\left(-n^{1/3}[(1+\sqrt\lambda)(x+\sigma)+\mu]z+\O\left(n^{1/3}w^2+w+n^{1/3}z^2+z\right)\right).
\end{aligned}\label{B1comp}\end{equation}

In the definition of $\beta_{t,v}^1$, after $w\to-aw/(1+\sqrt\lambda)$, one obtains
\begin{equation}\begin{aligned}
\beta_{t,v}^1(x)=&\frac{da^{3/2}}{(1+\sqrt\lambda)\sqrt{1-t}}\frac1{2\pi i} \int_{i\R}\d w \left(1+\frac w{\sqrt\lambda}\right)^{\lambda n}\\
&\times\exp\left(\frac{a^2tw^2}{(1+\sqrt\lambda)^2 2(1-t)}+\frac{a^2w}{(1+\sqrt\lambda)^2}+\frac{avw}{(1+\sqrt\lambda)(1-t)}-\frac{a^2xw}{1+\sqrt\lambda}\right).
\end{aligned}\end{equation}
Hence,
\begin{equation}\begin{aligned}
&n^{-1/3}\beta_{t,v}^1(1+xn^{-2/3})=(n^{2/3}+o(1))(1+\sqrt\lambda)^{1/2}\\
&\qquad\times\frac1{2\pi i}\int_{i\R}\d w \exp\left(n\left[\lambda\log\left(1+\frac w{\sqrt\lambda}\right)+\frac{w^2}2-w\right]\right)\\
&\qquad\times\exp\left(n^{2/3}\tau_2w^2-n^{1/3}[\sqrt\lambda\sigma-\xi_2+(1+\sqrt\lambda)x]w+\O\left(n^{1/3}w^2+w\right)\right).
\end{aligned}\label{beta1comp}\end{equation}

Similarly in \eqref{defC1} with $w\to aw/(1+\sqrt\lambda)$ and $z\to-az/(1+\sqrt\lambda)$, we get
\begin{equation}\begin{aligned}
\C_{s,u}^1(y)=&-\frac{da^{3/2}}{(1+\sqrt\lambda)\sqrt{1-s}}\frac1{(2\pi i)^2} \int_{D_1}\d z\int_{D_{\sqrt\lambda}}\d w
\left(\frac{1+w}{1-z}\right)^n\left(\frac1{1-\frac w{\sqrt\lambda}}\right)^{\lambda n}\frac1{w+z}\\
&\times\exp\left(-\frac{a^2sz^2}{(1+\sqrt\lambda)^2 2(1-s)}-\frac{a^2z}{(1+\sqrt\lambda)^2}-\frac{auz}{(1+\sqrt\lambda)(1-s)}-\frac{a^2yw}{1+\sqrt\lambda}\right).
\end{aligned}\end{equation}
After substituting \eqref{defa}--\eqref{defus}, it becomes
\begin{equation}\begin{aligned}
&n^{-1/3}\C_{s,u}^1(1+yn^{-2/3})=-(n^{2/3}+o(1))(1+\sqrt\lambda)^{1/2} \int_0^\infty\d\mu\\
&\qquad\times\frac1{2\pi i}\int_{D_1}\d z \exp\left(n\left[-\log(1-z)-\frac{z^2}2-z\right]-n^{2/3}\tau_1z^2-n^{1/3}[\sigma+\xi_1+\mu]z\right)\\
&\qquad\times\frac1{2\pi i}\int_{D_{\sqrt\lambda}}\d w \exp\left(n\left[\log(1+w)-\lambda\log\left(1-\frac w{\sqrt\lambda}\right)-(1+\sqrt\lambda)w\right]\right)\\
&\qquad\times\exp\left(-n^{1/3}[(1+\sqrt\lambda)(\sigma+y)+\mu]w+\O\left(n^{1/3}w^2+w+n^{1/3}z^2+z\right)\right).
\end{aligned}\label{C1comp}\end{equation}

Using the same change of variables $w\to aw/(1+\sqrt\lambda)$ and $z\to-az/(1+\sqrt\lambda)$ in $M_0^1$, we have
\begin{equation}\begin{aligned}
M_0^1(x,y)=&\frac{a^2}{(1+\sqrt\lambda)}\frac1{(2\pi i)^2} \int_{D_1}\d z\int_{D_{\sqrt\lambda}}\d w
\left(\frac{1+w}{1-z}\right)^n\left(\frac{1+\frac z{\sqrt\lambda}}{1-\frac w{\sqrt\lambda}}\right)^{\lambda n}\frac1{z+w}\\
&\times\exp\left(-\frac{a^2xz}{1+\sqrt\lambda}-\frac{a^2yw}{1+\sqrt\lambda}\right).
\end{aligned}\label{M01rew1}\end{equation}
That is,
\begin{equation}\begin{aligned}
&n^{-2/3}M_0^1(1+xn^{-2/3},1+yn^{-2/3})=(n^{2/3}+o(1))(1+\sqrt\lambda) \int_0^\infty\d\mu\\
&\qquad\times\frac1{2\pi i}\int_{D_{\sqrt\lambda}}\d w \exp\left(n\left[\log(1+w)-\lambda\log\left(1-\frac w{\sqrt\lambda}\right)-(1+\sqrt\lambda)w\right]\right)\\
&\qquad\times\exp\left(-n^{1/3}[(1+\sqrt\lambda)(\sigma+y)+\mu]w\right)\\
&\qquad\times\frac1{2\pi i}\int_{D_1}\d z \exp\left(n\left[\lambda\log\left(1+\frac z{\sqrt\lambda}\right)-\log(1-z)-(1+\sqrt\lambda)z\right]\right)\\
&\qquad\times\exp\left(-n^{1/3}[(1+\sqrt\lambda)(\sigma+x)+\mu]z+\O\left(n^{1/3}w^2+w+n^{1/3}z^2+z\right)\right).
\end{aligned}\label{M01comp}\end{equation}
We can take the integration paths $D_1$ and $D_{\sqrt\lambda}$ to denote here circles around $1$ and $\sqrt\lambda$ with radii $1$ and $\sqrt\lambda$, in this way, passing through $0$.

In the above formulae, integrals of form
\begin{equation}\frac1{2\pi i}\int_\gamma\d z e^{nf_0(z)+n^{2/3}f_1(z)+n^{1/3}f_2(z)}\end{equation}
appear. Hence, we can follow the steps of the proof of Lemma 6.1 in~\cite{BF08}.
It can be checked that in all the cases in \eqref{A1comp}, \eqref{B1comp}, \eqref{beta1comp}, \eqref{C1comp} and \eqref{M01comp},
$w_c=z_c=0$ is a double critical point for the functions below that appear as $f_0$. We also give the Taylor expansion for later use
\begin{equation}\begin{aligned}
\log(1+w)+\frac{w^2}2-w&=\frac{w^3}3+\O\left(w^4\right),\\
\lambda\log\left(1+\frac w{\sqrt\lambda}\right)+\frac{w^2}2-w&=\frac1{\sqrt\lambda}\frac{w^3}3+\O\left(w^4\right),\\
\log(1+w)-\lambda\log\left(1-\frac w{\sqrt\lambda}\right)-(1+\sqrt\lambda)w&=\frac{1+\sqrt\lambda}{\sqrt\lambda}\frac{w^3}3+\O\left(w^4\right),
\end{aligned}\label{f0taylor}\end{equation}
and their versions after taking the transformation $-f_0(-w)$ which preserves the leading terms on the right-hand side of \eqref{f0taylor}.

The issue of finding steep descent paths is treated in a more general setting in the proofs of Lemma~\ref{lem:bounds} and~\ref{lem:examplebound} below.
Thus the details are omitted here. As it turns out, the integration paths in \eqref{A1comp}, \eqref{B1comp}, \eqref{beta1comp}, \eqref{C1comp} and \eqref{M01comp}
are steep descent for $f_0$ in the following sense ($\gamma$ means the generic integration path):
\begin{itemize}
\item $\Re(f_0(z))$ on $\gamma$ reaches its maximum at $0$,
\item $\Re(f_0(z))$ is monotone along $\gamma$ except at its maximum point $0$ and, if $\gamma=D_1$ or $D_{\sqrt\lambda}$,
then also in the antipodal points $2$ or $2\sqrt\lambda$ respectively (where the real part reaches its minimum).
\end{itemize}

In the neighborhood of $0$, we can slightly modify the paths $i\R$, $D_1$ and $D_{\sqrt\lambda}$ in such a way that they are still steep descent and that, close to $0$, the descent is the steepest.
The modification is as follows. For a small $\delta>0$ that is given precisely later, we choose the path $\{e^{-i\pi/3}t,0\le t\le\delta\}\cup\{e^{i\pi/3}t,0\le t\le\delta\}$
close to $0$. This is clearly locally steepest descent for all functions in \eqref{f0taylor}.
In the case of $i\R$, we continue with $\delta/2+i\R$ to infinity. If we had a circle, we decrease the radius slightly, and we cut off a piece of it close to $0$ in such a way that it matches
the steepest descent path. Computations which are done in the proofs of Lemma~\ref{lem:bounds} and~\ref{lem:examplebound} show that
these modified paths are still steep descent for $f_0$ in all the cases in \eqref{A1comp}, \eqref{B1comp}, \eqref{beta1comp}, \eqref{C1comp} and \eqref{M01comp}.

Along the integration paths, only the contribution on $\gamma\cap\{|w|\le\delta\}$ is considered, since, by the steep descent property of the paths, the error that is made by neglecting
the rest of the path is $\O(\exp(-c\delta^3n))$ as $n\to\infty$ uniformly as $\xi_1$ and $\xi_2$ are in bounded intervals.

We choose now $\delta$ such that the error terms in \eqref{A1comp}, \eqref{B1comp}, \eqref{beta1comp}, \eqref{C1comp} and \eqref{M01comp}
are small enough in the $\delta$ neighborhood of $0$.
That is, for the difference of the integrals with and without the error in the exponent, we apply $|e^x-1|\le|x|e^{|x|}$.
We do a change of variables $n^{1/3}w\to w$, and, by taking $\delta$ small enough, we see that the difference is $\O(n^{-1/3})$ uniformly.

Finally, we can extend the steepest descent path to $e^{-i\pi/3}\infty$ and $e^{i\pi/3}\infty$ on the price of a uniformly $\O(\exp(-c\delta^3n))$ error again.
Then, we apply the formula
\begin{equation}\begin{aligned}
\frac1{2\pi i}\int_{e^{-i\pi/3}\infty}^{e^{i\pi/3}\infty}\exp\left(a\frac{z^3}3+bz^2+cz\right)\d z
&=a^{-1/3}\exp\left(\frac{2b^3}{3a^2}-\frac{bc}a\right)\Ai\left(\frac{b^2}{a^{4/3}}-\frac c{a^{1/3}}\right)\\
&=a^{-1/3}\Ai^{(a^{-2/3}b)}\left(-\frac c{a^{1/3}}\right),
\end{aligned}\end{equation}
and we exactly get the limits \eqref{A1conv}--\eqref{M01conv}.
From this, along with the use of symmetry given in Remark~\ref{rem:discrsymm}, \eqref{A2conv}--\eqref{M02conv} follow immediately.
All the error terms can be bounded uniformly in $\xi_1$ and $\xi_2$ if they are in a compact interval, hence Lemma~\ref{lem:ptwconv} is proved.
\end{proof}

First, we give the following lemma with its full proof. In the proof of Lemma~\ref{lem:bounds}, we will use the assertion, and some similar
statements which will not be spelled out later, because they can be shown as Lemma~\ref{lem:examplebound}.

\begin{lemma}\label{lem:examplebound}
There are constants $C,c>0$ such that
\begin{equation}\label{intbound}
\left|\frac{n^{1/3}}{2\pi i}\int_{i\R}\d w\exp\left(n\left[\log(1+w)+\frac{w^2}2-w\right]+\kappa_1n^{2/3}w^2-sn^{1/3}w\right)\right|\le Ce^{-cs}
\end{equation}
for $s>0$ large enough.
\end{lemma}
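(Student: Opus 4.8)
The plan is to perform a standard steep-descent estimate on the contour integral in \eqref{intbound}, exploiting the fact that $w_c=0$ is a double critical point of $f_0(w)=\log(1+w)+\tfrac{w^2}{2}-w$ and that the imaginary axis is a steep descent contour for $\Re f_0$. First I would record the two inputs that make this work: on $i\mathbb{R}$, writing $w=iy$, one has $\Re f_0(iy)=\tfrac12\log(1+y^2)-\tfrac{y^2}{2}$, which is easily checked to be negative for all $y\ne 0$, to vanish (with its first two derivatives) at $y=0$, and to be monotone decreasing in $|y|$; moreover for small $|w|$ the Taylor expansion $f_0(w)=\tfrac{w^3}{3}+\mathcal{O}(w^4)$ from \eqref{f0taylor} shows the cubic behaviour at the critical point. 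The quadratic term $\kappa_1 n^{2/3}w^2$ is, on $i\mathbb{R}$, equal to $-\kappa_1 n^{2/3}y^2\le$ (something of the right sign, or at worst a lower-order perturbation that can be absorbed), and the linear term contributes the phase $-s n^{1/3}(iy)$ which is purely oscillatory and hence harmless in absolute value — except that it is precisely the source of the decaying factor $e^{-cs}$ once we deform.

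Concretely I would split the contour into a central piece $\{|w|\le\delta\}$ and a tail $\{|w|>\delta\}$ for a fixed small $\delta>0$. On the tail, the steep descent property gives $n\,\Re f_0\le -c'\delta^3 n$ (using monotonicity and, near infinity, that $\Re f_0(iy)\to-\infty$ like $-y^2/2$, which also dominates the $\kappa_1 n^{2/3}y^2$ term once $n$ is large), so the tail contributes $\mathcal{O}(e^{-c'\delta^3 n})$, negligible and in particular $\le C e^{-cs}$ for the relevant range of $s$ (one restricts to, say, $s\le \epsilon n^{1/3}$; for larger $s$ the bound is trivial after a crude estimate, or one deforms the contour further to the right). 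On the central piece I would rescale $w=n^{-1/3}\zeta$, turning the exponent into $\tfrac{\zeta^3}{3}+\kappa_1\zeta^2 - s\zeta + \mathcal{O}(n^{-1/3}|\zeta|^4)$; the prefactor $n^{1/3}$ cancels the Jacobian $n^{-1/3}$, the error term is controlled by $|e^x-1|\le |x|e^{|x|}$ as in the proof of Lemma~\ref{lem:ptwconv}, and the remaining integral converges to
\[
\frac{1}{2\pi i}\int_{e^{-i\pi/3}\infty}^{e^{i\pi/3}\infty}\exp\!\left(\frac{\zeta^3}{3}+\kappa_1\zeta^2-s\zeta\right)\d\zeta = \Ai^{(\kappa_1)}(-s+\kappa_1^2)\cdot(\text{const}),
\]
using the integral representation recalled just before Lemma~\ref{lem:examplebound}. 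Finally, the classical super-exponential decay of the Airy function, $|\Ai(x)|\le C e^{-\frac23 x^{3/2}}$ for $x\ge 0$ — hence a fortiori $|\Ai^{(\kappa_1)}(-s+\kappa_1^2)|\le C e^{-cs}$ for $s$ large (the shift by the fixed constant $\kappa_1^2$ and the exponential prefactor $e^{\frac23\kappa_1^3+(-s+\kappa_1^2)\kappa_1}$ only affect $C$ and $c$) — yields the claimed bound $Ce^{-cs}$.

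The main obstacle is not the limiting Airy integral but making the estimate genuinely \emph{uniform in $s$} up to the scale $s\sim n^{1/3}$, since $\delta$ must be chosen independently of $s$ while the linear term $-sn^{1/3}w$ grows. The clean way around this is to deform the imaginary axis: near $0$ replace it by the steepest-descent wedge $\{e^{\pm i\pi/3}t:0\le t\le\delta\}$ (as already done in the proof of Lemma~\ref{lem:ptwconv}), on which $\Re(\tfrac{w^3}{3})=-\tfrac{t^3}{3}<0$ \emph{and} $\Re(-sn^{1/3}w)=-sn^{1/3}t\cos(\pi/3)=-\tfrac12 sn^{1/3}t<0$, so the linear term actively helps the decay rather than merely oscillating; one then checks (this is the computation deferred to Lemma~\ref{lem:bounds}) that the perturbed contour remains steep descent for the full exponent $nf_0+\kappa_1 n^{2/3}w^2-sn^{1/3}w$ globally. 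With that in hand, bounding $|e^{x}-1|$ on the central arc and extracting the Airy asymptotics gives \eqref{intbound} with uniform constants, and the cases $s\gtrsim n^{1/3}$ fall out of the contour-shift bound directly. I would then remark that the analogous bounds for the $z$-integrals over $D_1$, $D_{\sqrt\lambda}$, and for the other exponents in \eqref{f0taylor}, follow by the identical argument (using $-f_0(-w)$ and the corresponding steep-descent circles), which is exactly what Lemma~\ref{lem:bounds} will invoke.
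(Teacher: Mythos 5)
Your overall strategy — steep descent, local cubic structure, Airy asymptotics — is in the right spirit, but it misses the one ingredient the paper actually uses to produce the factor $e^{-cs}$, and the route you propose cannot, as written, deliver exponential decay in $s$.

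The paper does \emph{not} integrate over a fixed contour near $w=0$ and then invoke Airy decay. Instead it absorbs the linear term into the phase by setting $\wt s=n^{-2/3}s$ and $\wt f_0(w)=\log(1+w)+\tfrac{w^2}{2}-w-\wt sw$, locates the $s$-\emph{dependent} saddle at $w\approx\wt s^{1/2}$, and deforms $i\R$ to the shifted vertical line $\Gamma=\alpha+i\R$ with $\alpha=\min(\wt s^{1/2},\varepsilon^{1/2})$. The entire exponential decay in $s$ then comes from the prefactor $Q(\alpha)=\exp\bigl(\Re(n\wt f_0(\alpha)+\kappa_1 n^{2/3}\alpha^2)\bigr)$ evaluated at the shifted saddle: in the regime $\wt s\le\varepsilon$ this equals $\exp\bigl((-\tfrac23 s^{3/2}+\kappa_1 s)(1+\O(\sqrt\varepsilon))\bigr)$, and in the regime $\wt s>\varepsilon$ it is $\exp(-\tfrac13\sqrt\varepsilon\,n^{1/3}s)$. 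The integral along $\Gamma$ is then bounded by a constant times $Q(\alpha)$ by a routine Laplace estimate. No limit is taken and no cancellation is needed.

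Your version keeps the contour pinned at the origin (wedge $\{e^{\pm i\pi/3}t:0\le t\le\delta\}$ plus the imaginary axis). On that wedge, the real part of the full exponent at distance $t$ from $0$ is roughly $-\tfrac{n t^3}{3}-\tfrac12 sn^{1/3}t+\O(n^{2/3}t^2)$, and after the rescaling $t=n^{-1/3}\tau$ the prefactor $n^{1/3}$ cancels the Jacobian, so the \emph{crude modulus} bound on the central piece is
\begin{equation}
\int_0^{\infty} e^{-\tau^3/3-s\tau/2}\,\d\tau\le\frac{2}{s},
\end{equation}
which is only polynomial in $s$. The super-exponential decay of $\Ai^{(\kappa_1)}(s)$ is \emph{not} visible from absolute values on a contour passing through $0$; it comes entirely from cancellations, which is precisely why the standard proof of Airy decay itself re-deforms through the saddle $\zeta=\sqrt s$. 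So the step ``the remaining integral converges to the Airy representation and then Airy decay gives $Ce^{-cs}$'' is exactly where the argument breaks: to use the Airy bound you must control the error \emph{relative to} $\Ai^{(\kappa_1)}(s)$ — an error that is $\O(n^{-1/3})$ in absolute size is useless once $\Ai^{(\kappa_1)}(s)\lesssim n^{-1/3}$. Your acknowledgement that ``$\delta$ must be chosen independently of $s$'' correctly identifies the tension, but the proposed cure (the fixed wedge) does not resolve it; only the $s$-dependent contour shift does. As a minor point, the evaluation of the wedge integral should be $\Ai^{(\kappa_1)}(s)=e^{\tfrac23\kappa_1^3+s\kappa_1}\Ai(\kappa_1^2+s)$, not $\Ai^{(\kappa_1)}(-s+\kappa_1^2)$; with your sign the Airy argument would be large and negative for large $s$, where $\Ai$ merely oscillates with algebraic decay, so the claimed bound would not even be true at the level of the purported limit.
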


\begin{proof}[Proof of Lemma~\ref{lem:examplebound}]
We follow the lines of the proof of Proposition 5.3 in~\cite{BF07}. Since we are interested in large values of $s$, we take
\begin{equation}\label{defwts}
\wt s=n^{-2/3}s,
\end{equation}
and we define
\begin{equation}
\wt f_0(w)=\log(1+w)+\frac{w^2}2-w-\wt sw.
\end{equation}
For small values of $\wt s$, this function has two critical points at $\pm\wt s^{1/2}$ at first order, and we will pass through the positive one. Hence define
\begin{equation}\label{defalpha}
\alpha=\left\{\begin{array}{ll} \wt s^{1/2} & \mbox{if}\ \wt s\le\varepsilon,\\ \varepsilon^{1/2} & \mbox{if}\ \wt s>\varepsilon,\end{array}\right.
\end{equation}
for some small $\varepsilon>0$ to be chosen later and consider the path $\Gamma=\alpha+i\R$. By the Cauchy theorem, the integral in \eqref{intbound}
does not change if we modify the integration path to $\Gamma$.

The path $\Gamma$ is steep descent for the function $\Re(\wt f_0(w))$, since
\begin{equation}
\frac{\d}{\d t}\Re(\wt f_0(\alpha+it))=-t\underbrace{\left(1-\frac1{(1+\alpha)^2+t^2}\right)}_{\geq 0\textrm{ for }\alpha,t>0.}.
\end{equation}
Define
\begin{equation}Q(\alpha)=\exp\left(\Re\left(n\wt f_0(\alpha)+n^{2/3}\kappa_1\alpha^2\right)\right).\end{equation}
Let $\Gamma_\delta=\{\alpha+it,|t|\le\delta\}$. By the steep descent property of $\Gamma$, the contribution of the integral over $\Gamma\setminus\Gamma_\delta$
in \eqref{intbound} is bounded by $Q(\alpha)\O(e^{-cn})$ where $c>0$ does not depend on $n$. The integral on $\Gamma_\delta$ can be bounded by
\begin{equation}\label{maincontr}
Q(\alpha)\left|\frac{n^{1/3}}{2\pi i}\int_{\Gamma_\delta}\d w\exp\left(n(\wt f_0(w)-\wt f_0(\alpha))+n^{2/3}\kappa_1(w^2-\alpha^2)\right)\right|.
\end{equation}
By series expansion,
\begin{equation}\Re(\wt f_0(\alpha+it)-\wt f_0(\alpha))=-\gamma t^2(1+\O(t))\end{equation}
with
\begin{equation}\label{defgamma}
\gamma=\frac12\left(1-\frac1{(1+\alpha)^2}\right).
\end{equation}
After a change of variable $w=\alpha+it$, \eqref{maincontr} is written as
\begin{equation}
\begin{aligned}
&Q(\alpha)\frac{n^{1/3}}{2\pi}\int_{-\delta}^\delta \d t\exp\left(-\gamma t^2 n(1+\O(t))\left(1+\O\left(n^{-1/3}\right)\right)\right)\\
&\le Q(\alpha)\frac{n^{1/3}}{2\pi}\int_{-\delta}^\delta \d t\exp\left(-\frac{\gamma t^2 n}2\right)\le Q(\alpha)\frac1{\sqrt{2\pi\gamma n^{1/3}}}
\end{aligned}
\end{equation}
for $\delta$ small enough and $n$ large enough.
The estimate above is the largest if $\gamma$ is small. Note that, by \eqref{defgamma}, \eqref{defalpha} and \eqref{defwts},
\begin{equation}\gamma n^{1/3}\sim\alpha n^{1/3}\sim\wt s^{1/2}n^{1/3}\sim s^{1/2}\end{equation}
which is large if $s$ is large enough, so the integral in \eqref{maincontr} is at most constant times $Q(\alpha)$.

Hence, it remains to bound $Q(\alpha)$ exponentially in $s$.
For this end, we use the Taylor expansion
\begin{equation}\wt f_0(w)=\left(\frac{w^3}3-\wt sw\right)(1+\O(w)).\end{equation}

If $\wt s\le\varepsilon$, then
\begin{equation}
\begin{aligned}
Q(\alpha)&=\exp\left(\left(-\frac23n\wt s^{3/2}+\kappa_1n^{2/3}\wt s\right)\left(1+\O\left(\sqrt\varepsilon\right)\right)\right)\\
&=\exp\left(\left(-\frac23s^{3/2}+\kappa_1s\right)\left(1+\O\left(\sqrt\varepsilon\right)\right)\right)
\end{aligned}
\end{equation}
where the first term in the exponent dominates as $s$ is large, so this is even stronger than what we had to prove.

If $\wt s>\varepsilon$, then
\begin{equation}Q(\alpha)=\exp\left(\left(n\sqrt\varepsilon\left(\frac{\varepsilon}3-\wt s\right)+\kappa_1n^{2/3}\varepsilon\right)\left(1+\O\left(\sqrt\varepsilon\right)\right)\right)\end{equation}
where $\varepsilon/3-\wt s\le-\frac23\wt s$. Hence, the first term in the exponent is about $-\frac23\sqrt\varepsilon n^{1/3}s$ which dominates the second term
that is of order $\varepsilon n^{2/3}\sim s$. Therefore, for a given $\varepsilon>0$, $n$ can be chosen so large that
\begin{equation}Q(\alpha)\le\exp\left(-\frac13\sqrt\varepsilon n^{1/3}s\right)\end{equation}
which finishes the proof.
\end{proof}

\begin{proof}[Proof of Lemma~\ref{lem:bounds}]
By Remark~\ref{rem:discrsymm}, it is enough to prove all the bounds for $i=1$.
The assertion \eqref{Abound} can be shown as follows. Lemma~\ref{lem:examplebound} with $s=\sigma+\xi_2+\mu$ applies for the integral with respect to $w$ in \eqref{A1comp}
and provides an exponentially decaying bound in $\mu$. If we prove a similar statement for the integral with respect to $z$, then \eqref{Abound} follows for $i=1$ along with
the uniformity assertion for $\xi_1$ and $\xi_2$. We omit the details of the proof of the bound on the $z$-integral here, because they are very similar to that of Lemma~\ref{lem:examplebound},
but we give that, for the second integral in \eqref{A1comp}. We consider the function
\begin{equation}\label{defg0A}
g_0^{\A}(z)=-\log(1-z)-\frac{z^2}2-z-\wt sz
\end{equation}
and the path $\{1-\rho e^{i\phi}\}$ with $0<\rho\le1$. It is steep descent because
\begin{equation}\frac{\d}{\d\phi}\Re(g_0^{\A}(1-\rho e^{i\phi}))=-\rho\sin\phi\underbrace{(2(1-\rho\cos\phi)+\wt s)}_{\ge0}.\end{equation}

For \eqref{Bbound}, we use Lemma~\ref{lem:examplebound} and the fact that, for the function
\begin{equation}\label{defg0B}
g_0^{\B}(z)=\lambda\log\left(1+\frac z{\sqrt\lambda}\right)-\log(1-z)-(1+\sqrt\lambda)z-\wt sz,
\end{equation}
the path $\{1-\rho e^{i\phi}\}$ with $0<\rho\le1$ is steep descent. Indeed
\begin{equation}\frac{\d}{\d\phi}\Re(g_0^{\B}(1-\rho e^{i\phi}))=-\rho\sin\phi\left((1+\sqrt\lambda)\left(1-\frac{\lambda}{|\sqrt\lambda+1-\rho e^{i\phi}|^2}\right)+\wt s\right)\end{equation}
where the last factor between the outhermost parenthesis is certainly positive.

For proving \eqref{betabound}, we take the function
\begin{equation}f_0^\beta(w)=\lambda\log\left(1+\frac w{\sqrt\lambda}\right)+\frac{w^2}2-w-\wt sw.\end{equation}
The steep descent path is $\alpha+i\R$ for $\alpha\ge0$ by
\begin{equation}\frac{\d}{\d t}\Re(f_0^\beta(\alpha+it))=-t\underbrace{\left(1-\frac\lambda{(\sqrt\lambda+\alpha)^2+t^2}\right)}_{\ge0}.\end{equation}

Finally, for the function
\begin{equation}\label{deff0C}
f_0^{\C}(w)=\log(1+w)-\lambda\log\left(1-\frac w{\sqrt\lambda}\right)-(1+\sqrt\lambda)w-\wt sw,
\end{equation}
we choose the steep descent path $\{\sqrt\lambda-\rho e^{i\phi}\}$ with $0<\rho\le\sqrt\lambda$. The steep descent property is shown by
\begin{equation}\frac{\d}{\d\phi}\Re(f_0^{\C}(\sqrt\lambda-\rho e^{i\phi}))=-\rho\sin\phi\left((1+\sqrt\lambda)\left(1-\frac1{|\sqrt\lambda+1-\rho e^{i\phi}|^2}\right)+\wt s\right)\end{equation}
where the factor between the outhermost parenthesis is positive. This fact, together with the steep descent property of $g_0^{\A}(z)$ in \eqref{defg0A}, yields \eqref{Cbound}.

Using the steep descent paths for $f_0^{\C}(w)$ in \eqref{deff0C} and for $g_0^{\B}(z)$ in \eqref{defg0B},
we can mimic the proof of Lemma~\ref{lem:examplebound} to get exponential bounds on the integrals in \eqref{M01comp}. It completes the proof of Lemma~\ref{lem:bounds}.
\end{proof}


\end{document}